\documentclass[10pt]{amsart}
\usepackage{amsmath,amsthm,amsfonts,amssymb}
\usepackage{hyperref}
\usepackage[all]{xy}
\usepackage{graphicx}
\usepackage{amscd}
\usepackage{pb-diagram}
\usepackage{color}
\usepackage[mathscr]{eucal}
\usepackage{tikz}
\usetikzlibrary{patterns}

\usepackage{enumitem}

\numberwithin{equation}{section}

\newtheorem{theorem}{Theorem}[section]

\newtheorem{corollary}[theorem]{Corollary}

\newtheorem{lemma}[theorem]{Lemma}
\newtheorem{remark}[theorem]{Remark}
\newtheorem{prop}[theorem]{Proposition}

\parindent0pt
\addtolength{\baselineskip}{0.5\baselineskip}
\addtolength{\parskip}{10pt}

\newcommand\R{{\mathbb R}}
\newcommand\Z{{\mathbb Z}}
\newcommand\bS{{\mathbb S}}

\newcommand\sS{{\mathscr S}}

\newcommand\BI{{\mathbf I}}

\newcommand\CB{{\mathcal B}}
\newcommand\CC{{\mathcal C}}
\newcommand\CDD{{\mathcal D}}
\newcommand\CF{{\mathcal F}}
\newcommand\CR{{\mathcal R}}
\newcommand\CS{{\mathcal S}}

\newcommand\CI{{\mathcal I}}
\newcommand\CM{{\mathcal M}}
\newcommand\CN{{\mathcal N}}
\newcommand\wh[1]{{\widehat{#1}}}
\newcommand\wt[1]{{\widetilde{#1}}}

\newcommand\FB{{\mathfrak B}}
\newcommand\FD{{\mathfrak D}}

\newcommand\fp{{\mathfrak p}}

\newcommand\supp{\mbox{supp}}

\begin{document}

\author[E. Jeong]{Eunhee Jeong}
\author[S. Lee]{Sanghyuk Lee}

\address{School of Mathematics, Korea Institute for Advanced Study, Seoul 02455, Republic of Korea} \email{eunhee@kias.re.kr}

\address{Department of Mathematical Sciences and RIM, Seoul National University, Seoul 08826, Republic of  Korea}
\email{shklee@snu.ac.kr}

\title[Maximal estimates for bilinear operators]{Maximal estimates for   the bilinear spherical averages  and  the bilinear Bochner-Riesz operators}

\subjclass[2010]{{42B15}, {42B25}}
\keywords{Bilinear Bochner-Riesz operator, maximal estimate, bilinear spherical maximal function}

\maketitle
%%%%%%%%%%%%%%
%
%			
%
%%%%%%%%%%%%%%
\begin{abstract}
We study the maximal estimates  for the bilinear spherical  average 
and the bilinear Bochner-Riesz operator. First, we obtain $L^p\times L^q \to L^r$ estimates for  the bilinear spherical maximal function on the optimal range. 
Thus,  we settle the problem which was previously considered by 
 Geba, Greenleaf,  Iosevich, Palsson and Sawyer,  later  Barrionevo, Grafakos, D. He, Honz\'ik and Oliveira, and recently Heo, Hong and Yang.   
 Secondly,  we consider $L^p\times L^q \to L^r$ estimates for  the maximal bilinear Bochner-Riesz  operators and improve the previous known results. 
 For the purpose we draw a connection between the  maximal  estimates and  the square function estimates for the classical Bochner-Riesz operators.  
\end{abstract}
%%%%%%%%%%%%%
\section{Introduction and main theorems}

Let $d\ge 2$ and $m$ be a measurable function on $\R^d\times \R^d$.  The bilinear multiplier operator $T_m$ associated with $m$ is defined by 
\[ T_m(f,g)(x)=\int_{\R^d\times \R^d} e^{2\pi i x\cdot (\xi+\eta)} m(\xi,\eta) \wh f(\xi)\wh g(\eta) d\xi d\eta,\]
for Schwartz functions $f$ and $g$ in $\CS(\R^d)$. The study  on  $L^p\times L^q \to L^r$ boundedness of $T_m$  has a long history. 
   After appearance of  the seminal work of Lacey and Thiele \cite{LT1, LT2} on the boundedness of the bilinear Hilbert transform, there have been attempts to extend the earlier results to  the 
bilinear multiplier operators with less regular $m$. We refer the interested reader to \cite{
CoM,m-t,tomita} and references therein for more on background and related results.
In this note we are concerned with maximal bounds on the bilinear counterparts of  a couple notable operators, the bilinear spherical average and  the bilinear Bochner-Riesz operator.

\noindent \emph{The bilinear spherical maximal function.}
Let $d\ge2$ and $d\sigma_{d-1}$ be the induced surface measure on the sphere $\mathbb S^{d-1}$ in $\R^d$. The spherical maximal function 
\begin{equation*}\label{sm}
\sS f(x) =\sup_{t>0} \int_{\mathbb S^{d-1}}|f(x-ty)|d\sigma_{d-1} ,
\end{equation*}
was first studied  by Stein in \cite{stein}. He showed that, for $d\ge3$, $\|\sS f\|_{L^p(\R^d)}\le C\|f\|_{L^p(\R^d)}$ holds if and only  if   $\frac d{d-1}<p\le \infty$. 
By considering a suitable input function $f$, Stein also showed that  $L^p$ boundedness of $\sS$ fails for $p\le \frac d{d-1}$ and $d\ge2$. 
The case  $d=2$ turned out to be much more difficult than the problem for $d\ge3$. This is due to the fact that  the classical strategy based on $L^2$ estimate does not work since   $\sS$ is unbounded on $L^2(\R^2)$. The remaining case was later obtained by  Bourgain \cite{B}. 
Afterward,  Mockenhaupt-Seeger-Sogge provided a new proof for Bourgain's  result which relies on the local smoothing estimate for the wave operator \cite{MSS}.

In the first part of this paper, we mainly discuss  $L^p\times L^q\to L^r$ boundedness of a bilinear analogue  of $\sS$.
The  \emph{bilinear spherical maximal function}  $\CM$ is defined by
\begin{equation*}\label{bsm}
{\CM}(f,g)(x)= \sup_{t>0}  \int_{\bS^{2d-1}}| f(x-ty)g(x-tz) | d\sigma_{2d-1}(y,z).
\end{equation*}
The operator $\CM$  first appeared in \cite{GGIPES} and,  subsequently,  studied by   Barrionevo-Grafakos-D.He\footnote{To avoid possible confusion,  we add his first initial.}-Honz\'ik-Oliveira  \cite{BGHHO}, Grafakos-D.He-Honz\'ik \cite{GHH1}, and Heo-Hong-Yang \cite{HHY}.  Let $1\le p,q\le \infty$ and $0<r\le\infty$ satisfy the H\"older relation 
\begin{equation}
\label{holder}
\frac1p+\frac1q=\frac1r.
\end{equation} In a recent work  \cite{BGHHO}, Barrionevo, Grafakos, D.He, Honz\'ik, and Oliveira  showed that 
\begin{equation}\label{eq:pqr}
\|\CM(f,g)\|_{L^r(\R^d)}\lesssim \|f\|_{L^p(\R^d)}\|g\|_{L^q(\R^d)}
\end{equation}
holds when $(\frac1p,\frac1q)$ is in a open triangle with vertices $A=(1,0)$, $B=(0,1)$, $O=(0,0)$ for $d\ge 2$, and in a open quadrilateral with vertices $A,$ $C=(\frac{2d-10}{2d-5},\frac{2d-10}{2d-5})$, $B,$ $O$ for $d\ge 8$. (See Figure \ref{fig}). For $1\le p,q,r\le \infty$ (the Banach triangle case), they utilized the  boundedness of linear maximal operators which are associated with multipliers of limited decay (see  Rubio de Francia  \cite{ru}). To extend the range of exponents outside the Banach triangle, they obtained $L^2\times L^2\to L^1$ bound for $\CM$ via a wavelet decomposition.  
In \cite{GHH1}, Grafakos, {D.\,He}, and Honz\'ik obtained a bilinear analogue of Rubio de Francia's result in \cite{ru}  and as its application they obtained the estimate \eqref{eq:pqr} for $p=q=2$ when $d\ge4$. The result in \cite{GHH1} was very recently improved by Heo, Hong, and Yang \cite{HHY}. Their argument relies on a decomposition and the asymptotic expansion of the Fourier transform of $d\sigma_{2d-1}$. More precisely, they proved the estimate \eqref{eq:pqr} for $(\frac1p,\frac1q)$ which is contained in the open hexagon with vertices $A$, $D=(\frac{d-3}{d-2},1)$, $E=(\frac{2d-2}{2d-1},\frac{2d-2}{2d-1})$, $F=(1,\frac{d-2}{d-2})$, $B$ and $O$ when $d\ge 4$. They also obtained the estimate \eqref{eq:pqr} in an open hexagon including the Banach triangle when $2\le d\le 3$. (See \cite[Theorem 1]{HHY}).

\begin{figure}\label{fig}
\begin{tikzpicture}
\draw (0,0) rectangle (5,5);
\path[fill=gray!35] (0,0)--(0,5)--(4,5) --(5,4)--(5,0);
\draw[dash pattern= { on 2pt off 1pt}]  (4,5)--(5,4); 
\draw (4,5)--(0,5)--(0,0)--(5,0)--(5,4);
\node [below left] at (0,0) {$O$};\node [above] at (0,5) {$A$};
\node [above] at (4.2,5) {$G=(\frac{d-1}d,1)$};
\draw (4,5) circle [radius=0.04];
\node [right] at (5,4) {$H=(1,\frac{d-1}d)$};
\draw (5,4) circle [radius=0.04];
\node [below right] at (5,0) {$B$};
\node [ right] at (3,3) {C};
\draw (3,3) circle [radius=0.04]; \draw (5,0) circle [radius=0.04]; \draw (0,5) circle [radius=0.04];
\draw[dash pattern={on 2pt off 1pt}] (5,0)--(3,3)--(0,5); 
\draw[dash pattern={on 2pt off 1pt}] (0,0)--(5,5); 
\draw[dash pattern={on 2pt off 1pt}]  (3.05,5)--(4.2,4.2)--(5,3.05);
\draw (3.05,5) circle [radius=0.04]; \node[below left] at (3.05,5) {$D$}; 
\draw (5,3.05) circle [radius=0.04]; \node[below left] at (5,3.05) {$F$}; 
\draw (4.2,4.2) circle [radius=0.04]; \node [left] at (4.2,4.2) {$E$}; 
\draw [<->] (5.8,0.7)--(5.8,0)--(6.5,0);
\node at (6.7,0) {$\frac1p$};
\node at (5.8,0.7) [right]{$\frac1q$};
\end{tikzpicture}
\caption{$L^p\times L^q\to L^r$ boundedness of $\CM$, $d\ge 2$.}
\end{figure}

The following is our first result which completely characterizes $p, q,r$ for which \eqref{eq:pqr} holds. 

\begin{theorem}\label{thm:bsm} 
Let $d\ge2$. Let $1\le  p,q\le \infty$ and $0<r\le \infty$.  Then, the estimate \eqref{eq:pqr}  holds if and only if  $r>\frac d{2d-1}$ and  \eqref{holder} holds except the case $(p,q,r)=(1,\infty,1)$ or $(\infty, 1,1)$.
In addition, we have weak estimates in terms of Lorentz spaces:
\begin{equation}
\label{lorentz}
 \|\CM(f,g)\|_{L^{r,u}(\R^d)}\lesssim \|f\|_{L^{p,s}(\R^d)}\|g\|_{L^{q,t}(\R^d)}.
 \end{equation} 
 \begin{itemize}
 [leftmargin=0.28in]
 \item[$(\bf a)$]
    If  $p=r=1$ and \eqref{holder} holds, then  \eqref{lorentz} holds with $u=t=\infty$ and $s=1$.  
\end{itemize}
\vspace{-5pt}
Additionally, if $d\ge 3$, we have the following: 
\vspace{-10pt}
 \begin{itemize}
  [leftmargin=0.28in]
 \item[$(\bf b)$]
    If $p=1$, $q=\frac d{d-1}$, and \eqref{holder} holds, then  \eqref{lorentz} holds with $u=\infty$ and $s=t=1$. 
\item[$(\bf c)$]   
 If $1<p<\frac{d}{d-1}$, $ r=\frac d{2d-1}$, and \eqref{holder} holds, then \eqref{lorentz} holds with $u=\infty$ and  $s,t$ satisfying $\frac1s+\frac1t=\frac{2d-1}d$ and $s,t>0$.  
\end{itemize}
The assertions ${\bf(a)}$--$({\bf c})$ are also true when the roles of $(p,s)$ and $(q,t)$ are interchanged.
\end{theorem}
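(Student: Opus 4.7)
The plan is to derive a pointwise domination of $\CM$ by products involving the linear spherical maximal operator $\sS$ and the Hardy--Littlewood maximal operator $M$, reducing the bilinear problem to the classical $L^p$ theory of $\sS$ and $M$ combined with Hölder's inequality in Lorentz spaces.

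\emph{Pointwise reduction.} Slicing $\bS^{2d-1}$ by the two coordinate $d$-planes and writing a point as $(s\omega_1,\sqrt{1-s^2}\,\omega_2)$ with $s\in[0,1]$ and $\omega_1,\omega_2\in\bS^{d-1}$, the surface measure factors as $c\, s^{d-1}(1-s^2)^{(d-2)/2}\,d\sigma_{d-1}(\omega_1)\,d\sigma_{d-1}(\omega_2)\,ds$. With the notation $A_r h(x)=\int_{\bS^{d-1}} h(x-r\omega)\,d\sigma_{d-1}(\omega)$, this yields
\begin{equation*}
\int_{\bS^{2d-1}} f(x-ty)g(x-tz)\,d\sigma_{2d-1}(y,z) = c\int_0^1 (A_{ts}f)(x)(A_{t\sqrt{1-s^2}}g)(x)\,s^{d-1}(1-s^2)^{(d-2)/2}\,ds.
\end{equation*}
Using $|A_{ts}f|\le\sS f$ uniformly in $s,t$ and the substitution $u=t\sqrt{1-s^2}$, which converts the remaining integral into $t^{-d}\int_0^t A_u g(x)u^{d-1}(\cdots)\,du\lesssim Mg(x)$ (a ball average), and treating the roles of $f,g$ symmetrically, one obtains
\begin{equation*}
\CM(f,g)(x)\lesssim \min\bigl(\sS f(x)\,Mg(x),\ Mf(x)\,\sS g(x)\bigr).
\end{equation*}

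\emph{Sufficiency.} Hölder's inequality together with the known strong $L^p$-boundedness of $\sS$ for $p>d/(d-1)$ (\cite{stein, B}) and of $M$ for $p>1$ gives \eqref{eq:pqr} on the union of the two open rectangles $\{1/p<(d-1)/d,\ 1/q<1\}\cup\{1/p<1,\ 1/q<(d-1)/d\}$. For part (a), the bound $\sS g\le\|g\|_\infty$ reduces matters to the weak $(1,1)$ bound of $M$. For part (b), at $G=((d-1)/d,1)$ (and at $H$ by symmetry) one combines $\CM(f,g)\lesssim Mf\cdot\sS g$ with the restricted weak-type endpoint $\sS:L^{d/(d-1),1}\to L^{d/(d-1),\infty}$ (available for $d\ge 3$), the weak $(1,1)$ bound of $M$, and O'Neil's Hölder inequality in Lorentz spaces:
\begin{equation*}
\|\CM(f,g)\|_{L^{d/(2d-1),\infty}}\lesssim \|Mf\|_{L^{1,\infty}}\|\sS g\|_{L^{d/(d-1),\infty}}\lesssim \|f\|_{L^1}\|g\|_{L^{d/(d-1),1}}.
\end{equation*}
Part (c) follows by bilinear real interpolation between (b) at $G$ and its reflected version at $H$. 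The remaining triangular subregion of the pentagon, where both $1/p,1/q\ge(d-1)/d$, is then filled in by bilinear Marcinkiewicz interpolation between these weak endpoints and the strong bounds already obtained in the open rectangles.

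\emph{Necessity and principal obstacle.} Dilation invariance forces \eqref{holder}. The sharpness of $r>d/(2d-1)$ comes from testing $f=g=\chi_{B(0,1)}$: for $|x|\gg 1$ and $t\sim|x|\sqrt 2$, a direct estimate shows $\CM(f,g)(x)\gtrsim|x|^{-(2d-1)}$, so $L^r$-integrability forces $r(2d-1)>d$. The failure of the corners $(1,\infty,1)$ and $(\infty,1,1)$ reduces, upon taking $g$ essentially constant on a large ball, to the well-known unboundedness of $\sS$ on $L^1$. The main difficulty is the sub-unit Lorentz target in (b): one must verify the restricted weak-type endpoint for $\sS$ at $p=d/(d-1)$ (hence the restriction $d\ge 3$) and perform bilinear real interpolation into the non-Banach range $r=d/(2d-1)<1$, both of which require care beyond the standard Marcinkiewicz framework.
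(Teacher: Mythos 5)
Your strategy coincides with the paper's: a slicing identity for $d\sigma_{2d-1}$ yielding the pointwise bound $\CM(f,g)\lesssim\min(\sS f\cdot Mg,\ Mf\cdot\sS g)$, then H\"older's inequality, the known bounds for $M$ and $\sS$ (including Bourgain's restricted weak type endpoint at $p=\tfrac d{d-1}$ for $d\ge3$), and bilinear interpolation; your polar-coordinate parametrization of $\bS^{2d-1}$ is equivalent to the paper's ball-slicing formula. Two smaller remarks first. Interpolation should be applied to a linearization $\mathcal A_\tau(f,g)(x)=\int_{\bS^{2d-1}}f(x-\tau(x)y)g(x-\tau(x)z)\,d\sigma_{2d-1}(y,z)$ with $\tau$ an arbitrary measurable stopping function, not to the sublinear $\CM$ itself. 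Also, the failure at $(1,\infty,1)$ reduces, via $g\equiv1$ and the slicing identity, to the unboundedness of the Hardy--Littlewood maximal operator $M$ on $L^1$, not of $\sS$: the spherical factor integrates out and only a ball average of $f$ survives, so $\CM(f,1)\gtrsim Mf$.

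There are, however, two coverage gaps in your sufficiency argument as written. First, the theorem asserts \emph{strong} bounds on the open segments $\{p=1,\ \tfrac d{d-1}<q<\infty\}$ (and symmetrically in $p,q$), where $r=q/(q+1)<1$. These points lie in neither of your open rectangles nor in the triangle you interpolate for, and your part (a) only treats $q=\infty$. The paper reaches them by first proving the weak bounds $\|\mathcal A_\tau(f,g)\|_{L^{r,\infty}}\lesssim\|Mf\|_{L^{1,\infty}}\|\sS g\|_{L^q}\lesssim\|f\|_{L^1}\|g\|_{L^q}$ for every $q\in(\tfrac d{d-1},\infty]$ and then interpolating in $q$ with $p=1$ fixed to upgrade to strong type at the interior exponents. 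Second, you fill the triangle $\{1/p,1/q\ge\tfrac{d-1}d,\ 1/p+1/q<\tfrac{2d-1}d\}$ by interpolating against the weak endpoints at $G$ and $H$, which, as you yourself note, are unavailable for $d=2$; so for $d=2$ the region $\{1/p,1/q\ge\tfrac12,\ 1/p+1/q<\tfrac32\}$ is not reached by your argument. The fix is to interpolate directly between strong bounds at points of the two rectangles — every point of the open pentagon lies on a segment (along which $1/p+1/q$ is constant) joining the two rectangles — which is what the paper does and needs no endpoint input. Finally, for part (c) your one-line appeal to ``bilinear real interpolation'' between two restricted weak type estimates with the common quasi-Banach target $L^{\frac d{2d-1},\infty}$ does need the extra step the paper supplies: one first uses the $\tfrac d{2d-1}$-convexity of $L^{\frac d{2d-1},\infty}$ to pass from characteristic functions to general functions on the whole segment, and only then interpolates (Janson; Bak--Oberlin--Seeger).
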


Actually,  we obtain estimates for a stronger maximal operator, see Remark \ref{strong}. 
Necessity of the condition \eqref{holder} and $r>\frac d{2d-1}$ is easy to see. 
Indeed,  since \[{\CM}(f(R\cdot), g(R\cdot))(x/R)={\CM}(f,g)(x), \  \forall R>0,\] 
 by scaling one can easily see that   $\|\CM (f,g)\|_r/(\|f\|_p\|g\|_q)\sim  R^{d(\frac1r-\frac1p-\frac1q)}$ for any $R>0$ as long as the estimate \eqref{eq:pqr} is true.
And it was shown in \cite{BGHHO} that the estimate \eqref{eq:pqr} fails when $r\le \frac{d}{2d-1}.$  This was done by testing  variants of the function against  \eqref{eq:pqr}  which  was used by Stein to show the sharp range of boundedness of the spherical maximal function.  The failure of  \eqref{eq:pqr} for  $r< \frac{d}{2d-1}$ can also be verified by  a simpler Knapp type example (see Proposition  \ref{pro:low1}).

Our result is based on a simple observation that  $\CM$ can be bounded  by a product of the Hardy-Littlewood maximal function $M$ and the (linear) spherical maximal function $\sS$ (see Lemma \ref{lem:pt}). This is done by  a kind of slicing argument.  As a consequence, Theorem \ref{thm:bsm} is verified by making use of the known bounds for $M$ and $\sS$, and interpolation.  The argument which we use to  prove Theorem \ref{thm:bsm}  continues to work for  the general $k$--linear maximal operator. See Remark \ref{multi} for more details.

The estimates for $p,q$ satisfying $\frac1r=\frac1p+\frac1q =\frac{2d-1}d$ are of special interest and it seems likely that  the weak estimates in this case can be further improved. These estimates correspond to the critical endpoint estimate for $\sS$ with $p=\frac d{d-1}$. For the spherical maximal function  Bourgain \cite{B0} showed the estimate $\|\sS f\|_{L^{\frac{d}{d-1},\infty}}\lesssim \|f\|_{L^{\frac{d}{d-1},1}}$  when $d\ge 3$ but failure of such estimate when $d=2$ was shown by Seeger, Tao, and Wright \cite{STW}.

\subsubsection*{Localized  maximal function} Let us consider the maximal operator $\wt\sS$ which is given by 
taking supremum over $t\in[1,2]$: 
 \begin{equation*}\label{tu_sm}
\wt\sS f(x) =\sup_{1\le t\le 2} \int_{\mathbb S^{d-1}}|f(x-ty)|d\sigma_{d-1}(y).
\end{equation*}
Though the estimate for $\wt\sS$ looks weaker than that for $\sS$, by using the Littlewood-Paley theory (for example, see \cite{schlag0}), one can deduce the $L^p$-bound for $ \sS$  from the estimate for the truncated maximal operator $\wt{\sS}$. 
Thanks to the localization in $t$,  $f\mapsto \wt\sS f $ exhibits $L^p$-improving properties, that is to say,  
\begin{equation}
\label{improving}
\|\wt\sS f\|_v\le C \|f\|_u
\end{equation}  with some $u<v$.  This was observed 
in the work of  Mockenhaupt-Seeger-Sogge \cite{MSS} with $d=2$.  Later on,  Schlag \cite{schlag} characterized almost complete  
set of $(u,v)$ for which \eqref{improving} holds when $d=2$. 
Schlag and Sogge \cite{SS} extended such result to the higher dimensions, $d\ge 3$, but the estimates on the borderline  were missing. One of the authors \cite{Lee} obtained most of the (left open) endpoint estimates for $\wt\sS$ on the borderline for $d\ge 2$ but  there are  still a few endpoint  estimates of which validities are not settled yet. See Theorem \ref{local_sm} below.

As in the linear case, we consider a localized bilinear maximal function $\wt\CM$ given by 
\begin{equation}\label{tu_bsm}
\wt{\CM}(f,g)(x)= \sup_{1\le t\le 2} \Big| \int_{\bS^{2d-1}} f(x-ty)g(x-tz) d\sigma_{2d-1}(y,z)\Big|.
\end{equation}
Thanks to the localization of $t$,  the  operator $(f,g)\mapsto \wt{\CM}(f,g)$ is 
free of scaling invariance.  Thus, it is natural to expect that 
\begin{equation*}\label{loc_pqr} 
\|\wt\CM(f,g)\|_{L^r(\R^d)}\le C \|f\|_{L^p(\R^d)}\|g\|_{L^q(\R^d)}
\end{equation*}
holds true on a wider range of  $p,q,r$ which do not necessarily satisfy the H\"older relation \eqref{holder}. In particular, we manage to obtain  the sharp  range of exponents $p,q$ when $r$ is in a certain region.
See Section \ref{sec:loc}.

\noindent\emph{The maximal bilinear Bochner-Riesz operator.}
We now consider the bilinear Bochner-Riesz operator $\CB^\alpha_\lambda$ of order $\alpha \ge 0$, which is a bilinear multiplier operator defined by 
\begin{equation*}\label{bBR}
\CB^\alpha_\lambda(f,g)(x)=\iint_{\R^d\times\R^d} e^{2\pi i x\cdot(\xi+\eta)}\big(1-|\lambda\xi|^2-|\lambda\eta|^2\big)^\alpha_+\,\wh{f}(\xi)\wh g(\eta)d\xi d\eta,\quad \lambda>0,
\end{equation*}
for $f$ and $g$ in $\CS(\R^d)$. Here, $r_+ =r$ for $r>0$ and $r_+=0$ for $r\le 0$, and $\widehat f$ is the Fourier transform of $f$ given by $\int_{\R^d} e^{-2\pi i x\cdot \xi}f(x)dx$. The bilinear Bochner-Riesz operator is not only a model operator of which multiplier has singularities with non-vanishing Gaussian curvature, but also a natural bilinear extension of the classical Bochner-Riesz operator $\CR^\alpha_\lambda$ which is given by
\[ \CR^\alpha_\lambda(f)(x) =\int_{\R^d} e^{2\pi i x\cdot \xi} (1-|\lambda\xi|^2)^\alpha_+ \,\widehat{f}(\xi) d\xi,\quad f\in \CS(\R^d).\]
The study of the Bochner-Reisz operator has its origin at understanding  summability of the Fourier series. Especially, related to norm convergence of the Fourier series,  $L^p$-boundedness of  the Bochner-Reisz operator has been studied. The conjecture, which is known as the Bochner-Riesz conjecture, is {stated} as follows:   For $1\le p\le \infty$ except $p=2$, the operator $\CR^\alpha_\lambda$ is bounded on $L^p(\R^d)$  if and only if 
\begin{equation*}\label{critical}
 \alpha>\alpha(p) =\max\Big\{ d\Big|\frac12-\frac1p\Big|-\frac12, 0 \Big\}.
 \end{equation*}
This  was proved by Carleson and Sj\"olin \cite{CSj} for $d=2$. For $d\ge3$  substantial progresses have  been achieved for the last couple of decades but the conjecture still remains open. We refer to \cite{Fe1,B,TV,lee1,BGu,GHiI} and references therein for details. The maximal operator $\CR^\alpha_*=\sup_{\lambda>0}|\CR^\alpha_\lambda|$ has been of interest in its connection to pointwise convergence of the Fourier series, and the maximal estimate may also be regarded as a vector valued generalization of  the estimate for   $\CR^\alpha_\lambda$. It is also conjectured that for $p>2$ the maximal operator $\CR^\alpha_*$ is bounded on $L^p(\R^d)$ if and only if $\alpha>\alpha(p)$. For $d=2$ the conjecture was shown to be true by Carbery \cite{Ca1}. For $d\ge 3$  partial results are known  although the corresponding pointwise convergence with the optimal order was shown by Carbery-Rubio de Francia-Vega who used  $L^2$ weighted inequality \cite{ CaRuVe}. See \cite{Ch,S1, lee1,LS, An, lee2} and references therein for more details and recent results.   When $1<p<2$, it turned out that $L^p$-boundedness of $\CR^\alpha_*$  is different to that of  $\CR^\alpha_\lambda$. An additional necessary condition was shown by Tao \cite{tao}.  

 Recently, $L^p\times L^q\to L^r$ boundedness of the   bilinear multiplier operators (including that of the bilinear Bochner-Reisz operator $\CB^\alpha_\lambda$)  has been studied by several authors
 \cite{GLi,DiG,BeGe,BeGSY, JLV}. In particular, the authors of \cite{JLV} used a new idea which splits the interaction between two variables $\xi$ and $\eta$ in the Fourier side, and made a connection between the boundedness of $\CB^\alpha_1$ and the square function estimates for the (linear) Bochner-Riesz operator. Consequently, they managed to improve  the previous known results and obtained some sharp bounds when $d=2$.

The maximal estimates for $\CB^\alpha_\lambda$  were recently studied  in \cite{He,GHH1}.  Grafakos, D.He, and Honz\'ik \cite{GHH1} showed that the maximal operator $\CB^\alpha_*=\sup_{\lambda>0} |\CB^\alpha_\lambda|$ is bounded from $L^2(\R^d)\times L^2(\R^d)$ to $L^1(\R^d)$ for $\alpha>\frac{2d+3}4$. In \cite{GHH1},  $L^2\times L^2\to L^1$ boundedness was shown for general maximal operators $\sup_{t>0}|T_{m(t\cdot)}(f,g)|$ 
of which (bilinear) multiplier $m$ has a limited decay (see \cite[Theorem 1.1]{GHH1}). As an application, they obtained the aforementioned result  on the $L^2\times L^2\to L^1$ boundedness of $\CB^\alpha_*$. This estimate can be interpolated with other easier estimates to give  the $L^p\times L^q\to L^r$ bound for the other exponents $p,$ $q,$ and $r$ with some range of $\alpha$. However, these estimates seem to be far from being optimal. 

From now on, we set
\begin{equation*}\label{mbbr}
\CB^\alpha_*(f,g)=\sup_{\lambda>0}|\CB^\alpha_\lambda(f,g)|.
\end{equation*} 
The second half of this paper is devoted to improving  the range of $\alpha$ for which the maximal operator  $\CB^\alpha_\ast$ is bounded from $L^p(\R^d)\times L^q(\R^d)$ to $L^r(\R^d)$ when $p,q\ge 2$. Especially, we adopt the decomposition strategy in  Jeong-Lee-Vargas \cite{JLV} and draw a connection between boundedness of $\CB^\alpha_*$ and the square function estimates associated with the classical Bochner-Riesz operator. More precisely, for $0<\delta\ll 1$ and a smooth function $\varphi$ supported in $[-1,1]$,  we consider a square function $\mathfrak S_\delta^\varphi$ which is given by 
\begin{equation}\label{lo-square}
  \mathfrak{S}_\delta^\varphi f(x) = \Big(\int_{1/2} ^2\Big|\varphi\Big(\frac{t-|D|^2}\delta\Big) f(x)\Big|^2dt\Big)^{1/2}.
  \end{equation}
It is conjectured that for $s\ge\frac{2d}{d-1}$ and $\epsilon>0$,  there exists 
$C=C(\epsilon)$ such that 
\begin{equation}
\label{del-square}
 \|\mathfrak{S}_\delta^\varphi f\|_{L^s(\R^d)}\le C\delta^{  -\frac{d-2}2+\frac dp-\epsilon}\|f\|_{L^s(\R^d)}.
 \end{equation}  
The estimate  \eqref{del-square}  has been studied by many authors 
(\cite{Ca1, Ch, LRS, lee2}). The conjecture \eqref{del-square} not only implies the maximal Bochner-Riesz conjecture but also has various applications (see \cite{CGT, LRS} and references therein). The most recent result for the estimate \eqref{del-square} can be found in Lee-Rogers-Seeger \cite{LRS} and Lee \cite{lee2} (see Theorem \ref{LRS_L} below).

We now introduce some notations to present our result. For $\nu\in [0, \frac{d-1}{2d}]$, we set 
\begin{align*}
\CDD_1(\nu)=& \big \{(u,v)\in [0,1/2]^2 : u,v\le \nu \big \},
\quad \CDD_2(\nu)= \big \{(u,v)\in  [0,1/2]^2 :  u, v\ge \nu \big \},
\\
\CDD_3(\nu)=&
\big \{(u,v)\in  [0,1/2]^2 : u < \nu <v \mbox{ or } v < \nu< u\big \}.
\end{align*}
The regions $\CDD_j(\nu)$, $1\le j\le 3$, are pairwise disjoint and  $\bigcup_{j=1}^3 \CDD_j(\nu)=[0,1/2]^2$. 
We define a real valued function $\alpha_{1/\nu}^*: [2,\infty]^2\to \mathbb R$  by 
\begin{equation}\label{alpha}
\alpha^*_{1/\nu}(p,q)\!
= \! \begin{cases}      
\ \alpha(p)+\alpha(q)+1=  d(1-1/p-1/q), & \quad  (1/p,1/q)\in \CDD_1(\nu),  
\\
\  1+\frac{2(1-1/p-1/q)}{1-2\nu}\alpha(1/\nu),& \quad (1/p,1/q)\in \CDD_2(\nu), 
\\
\  1+ \alpha(p)\lor \alpha(q)+ \alpha(1/\nu) (\frac{p-2}{(1-2\nu)p}\land\frac{q-2}{(1-2\nu)q}),  & \quad   (1/p,1/q)\in \CDD_3(\nu).
\end{cases}    \end{equation}
Here $\alpha(p)=\max\{d|\frac1p-\frac12|-\frac12,0\}$ is the critical index for the $L^p$-boundedness of the  Bochner-Riesz operator. Our result for $\CB^\alpha_*$ is the following.

\begin{theorem}\label{thm:main}
 Let $d\ge 2$ and $ \fp\ge \frac{2d}{d-1}$. Let $2\le p,q\le\infty$ and $r$ satisfy $\frac1r=\frac1p+\frac1q.$  Suppose that for $s\ge \fp$ the estimate \eqref{del-square} holds with $C$ independent of $\varphi$  {whenever $\varphi\in \CC^N([-1,1])$ for some  $N\in\mathbb N$}.   Then for any $\alpha >\alpha^*_{\fp}(p,q)$  we have 
 \begin{equation}\label{pqr}
  \|\CB^\alpha_*(f,g)\|_{L^r(\R^d)} \lesssim \|f\|_{L^p(\R^d)}\|g\|_{L^q(\R^d)}.
  \end{equation}
 Here $\CC^N([-1,1])$ is a class of smooth functions supported in $[-1,1]$ and  with normalized $C^N$-norm. (See  Section \ref{sec:w} for its precise definition). 
\end{theorem}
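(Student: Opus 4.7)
The plan is to follow the Jeong--Lee--Vargas strategy: decouple the bilinear symbol via a Beta-function convolution identity into a one-parameter integral of products of rescaled linear Bochner--Riesz multipliers, pointwise bound the resulting bilinear maximal operator by a product of two linear maximal Bochner--Riesz operators, and invoke the hypothesized square function estimate via the standard implication ``square function $\Longrightarrow$ maximal Bochner--Riesz.''

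Concretely, for $\alpha_1,\alpha_2>-1$ with $\alpha_1+\alpha_2+1=\alpha$, the Beta identity
\[
(1-A-B)_+^{\alpha} \,=\, c_{\alpha_1,\alpha_2}\int_0^1(1-s-A)_+^{\alpha_1}(s-B)_+^{\alpha_2}\,ds,
\]
applied to $A=|\lambda\xi|^2$, $B=|\lambda\eta|^2$ and rescaled via $(1-s-A)_+^{\alpha_1}=(1-s)^{\alpha_1}_+\bigl(1-|\lambda(1-s)^{-1/2}\xi|^2\bigr)_+^{\alpha_1}$ (similarly for $B$), yields
\[
\CB^\alpha_\lambda(f,g)(x)=c\int_0^1 (1-s)^{\alpha_1}s^{\alpha_2}\,\CR^{\alpha_1}_{\lambda/\sqrt{1-s}}f(x)\cdot \CR^{\alpha_2}_{\lambda/\sqrt{s}}g(x)\,ds.
\]
For each fixed $s$, the maps $\lambda\mapsto \lambda/\sqrt{1-s}$ and $\lambda\mapsto \lambda/\sqrt{s}$ are bijections of $(0,\infty)$, so pulling the supremum into the $s$-integral gives the pointwise bound $\CB^\alpha_*(f,g)(x)\le C_{\alpha_1,\alpha_2}\,\CR^{\alpha_1}_*f(x)\cdot\CR^{\alpha_2}_*g(x)$, and H\"older's inequality reduces \eqref{pqr} to proving the linear maximal estimates $\|\CR^{\alpha_1}_*f\|_p\lesssim\|f\|_p$ and $\|\CR^{\alpha_2}_*g\|_q\lesssim\|g\|_q$ for suitable $(\alpha_1,\alpha_2)$.

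The linear maximal Bochner--Riesz bound on $L^s$ with $s\ge \fp$ follows from the hypothesized square function estimate \eqref{del-square} by the standard Carbery-type reduction (the fundamental theorem of calculus in the scale parameter together with a dyadic decomposition of the multiplier), giving $\|\CR^\beta_* h\|_s\lesssim \|h\|_s$ whenever $\beta>\alpha(s)$ and $s\ge \fp$. Combined with Stein's classical $L^2$ bound ($\beta>1/2$) and complex interpolation of the analytic family $\beta\mapsto \CR^\beta_\lambda$, this produces the full table of linear maximal bounds on $L^s$ for $s\in[2,\infty]$. Optimizing $\alpha_1+\alpha_2+1$ over the admissible choices then splits into three sub-cases according to whether $(1/p,1/q)$ lies in $\CDD_1(\nu)$, $\CDD_2(\nu)$ or $\CDD_3(\nu)$: in $\CDD_1(\nu)$ one uses the square function on both factors; in $\CDD_2(\nu)$ one interpolates on both factors; in $\CDD_3(\nu)$ one uses the square function on the larger exponent and interpolates on the smaller.

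The main obstacle is the sharp matching of the piecewise formula \eqref{alpha} in the mixed regions $\CDD_2(\nu)$ and $\CDD_3(\nu)$: a na\"ive application of the reduction above, with each $\alpha_i$ chosen just above the linear maximal-BR threshold on $L^{s_i}$, actually exceeds the claimed $\alpha^*_\fp(p,q)$ by an additive amount that can be as large as $1$ (e.g.\ at $p=q=2$). To save this loss one must extract additional cancellation from the bilinear structure, presumably through complex interpolation of an analytic family of bilinear multiplier operators obtained by linearizing the supremum via a measurable selection of $\lambda$, taking $\CDD_1$-boundary estimates and an easier endpoint as the anchor points, or through a refined dyadic splitting that separates the regimes $|\xi|\gtrsim|\eta|$ and $|\eta|\gtrsim|\xi|$ and applies the linear maximal bounds asymmetrically. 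Carrying out this refinement so that the combined exponent reproduces $\alpha^*_\fp(p,q)$ exactly, with no additional $\epsilon$-loss beyond those already hidden in \eqref{del-square}, is the principal technical challenge.
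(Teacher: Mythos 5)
Your route is genuinely different from the paper's, and --- contrary to your closing paragraph --- it reproduces the stated thresholds exactly: the ``principal technical challenge'' you identify is an arithmetic mirage. The paper never uses the subordination (Beta) identity; it runs the Jeong--Lee--Vargas decomposition at each dyadic distance $\delta$ from the singular sphere (Lemma \ref{lem:decom}), controls the resulting maximal object by the mixed square functions $\FD^\varphi_{\delta,k}$ with an $\ell^\infty$ over the dyadic scale $k$ (Proposition \ref{prop:square}, Corollary \ref{lem:L2}), and pays the extra ``$+1$'' because the fundamental-theorem-of-calculus step has to be applied to the bilinear object. Your factorization pays the same ``$+1$'' up front, in $\alpha=\alpha_1+\alpha_2+1$, and then reduces everything to $\CR^{\beta}_*$. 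The point you missed when checking the numerology is that the per-factor thresholds available under the hypothesis are $A(s)=\alpha(s)$ for $s\ge\fp$ (square function $\Rightarrow$ maximal Bochner--Riesz), $A(2)=0$ (the classical $L^2$ bound for $\CR^\beta_*$ holds for all $\beta>0$, not $\beta>1/2$), $A(\infty)=\frac{d-1}2$ by kernel estimates, and $A(s)=\frac{1-2/s}{1-2\nu}\,\alpha(\fp)$ for $2\le s\le\fp$ by interpolation; and one checks directly that $1+A(p)+A(q)=\alpha^*_{\fp}(p,q)$ in all three regions of \eqref{alpha}. Indeed, in $\CDD_1$ this is $1+\alpha(p)+\alpha(q)$; in $\CDD_2$ it is $1+\frac{(1-2/p)+(1-2/q)}{1-2\nu}\alpha(\fp)=1+\frac{2(1-1/p-1/q)}{1-2\nu}\alpha(1/\nu)$; and in $\CDD_3$ with, say, $1/p<\nu<1/q$ it is $1+\alpha(p)+\frac{1-2/q}{1-2\nu}\alpha(\fp)$, which is the $\lor/\land$ expression since $\alpha(p)\ge\alpha(q)$ and $\frac{1-2/q}{1-2\nu}<1<\frac{1-2/p}{1-2\nu}$. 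In particular at $p=q=2$ you get $\alpha>1=\alpha^*_{\fp}(2,2)$ with no loss, and all the required $\alpha_i$ are $\ge 0>-1$, so the Beta identity is applicable. No bilinear complex interpolation or asymmetric dyadic splitting is needed.

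What genuinely remains, and is the real content in either approach, is: (i) the passage from \eqref{del-square} to $\|\CR^\beta_*h\|_{L^s}\lesssim\|h\|_{L^s}$ for $\beta>\alpha(s)$, $s\ge\fp$, including the summation over the dyadic scales $2^k$ hidden in the full supremum $\sup_{\lambda>0}$ --- this is where the uniformity over $\varphi\in\CC^N(I)$, the Littlewood--Paley argument, and the treatment of the low-frequency piece enter, and it is essentially Proposition \ref{prop:square} specialized to the linear setting; and (ii) the interpolation of the sublinear family $\CR^\beta_*$ between $L^2$ and $L^{\fp}$, which requires linearizing the supremum by a measurable $\lambda(x)$ and applying Stein interpolation to the analytic family of complex orders with admissible growth in the imaginary part. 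Both are standard, and you correctly flag them as such; the paper faces the analogous interpolation issue for $\sup_k\FD^{\varphi}_{\tilde\delta,k}$. The trade-off worth recording: your argument is shorter and conceptually cleaner for this theorem, but it is structurally locked into the ``$+1$'' (which is why \cite{BeGSY}-type product decompositions were superseded by \cite{JLV} for the non-maximal operator $\CB^\alpha_1$), whereas the paper's decomposition is the one with the potential to do better if the double application of the fundamental theorem of calculus could ever be avoided.
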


Though the statement looks a  bit complicated, the main estimates are those estimates
with $\alpha>\alpha(p)+\alpha(q)+1$ while $p,q,r$ satisfy $\frac1p+\frac1q=\frac1r$  {and $ p,q > \fp\ge \frac{2d}{d-1}$ or  $(p,q)=(2,2)$}. Compared with $L^p\times L^q\to L^r$ boundedness of $\CB^\alpha_1$ in \cite[Theorem 1.1]{JLV}, the lower bound $\alpha(p)+\alpha(q)+1$ on $\alpha$ is exactly 1 larger than that for $\CB^\alpha_1$ even when $p=q=2$.  Roughly speaking, this results from controlling the maximal function by products of  square functions via the Sobolev embedding type inequality which is based on the fundamental theorem of calculus. (see Section \ref{sec:br}). In fact, unlike the linear case we have to apply this argument twice to control the bilinear maximal function. 
  It is not difficult  to see that the constant $C$ in  the estimate \eqref{del-square} depends only on $C^N$-norm of $\varphi$ for some large $N$. 
 Thus,  from currently the best known result regarding the estimates \eqref{del-square} (Theorem \ref{LRS_L})
and Theorem \ref{thm:main} we get the following. 

\begin{corollary}\label{thm:bBR} Let $d\ge 2$ and let $2\le p,q\le \infty$ and $1\le r\le \infty$ satisfy $\frac1p+\frac1q=\frac1r.$ Then the estimate \eqref{pqr} holds whenever $\alpha >\alpha^*_{p_s}(p,q)$, where $p_s=p_s(d)$ in Theorem \ref{LRS_L}. 
\end{corollary}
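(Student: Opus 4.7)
The plan is to deduce this essentially as a direct combination of Theorem \ref{thm:main} with the best known square function estimate Theorem \ref{LRS_L}. I set $\fp = p_s(d)$ and check the two hypotheses required to apply Theorem \ref{thm:main}: first, that $p_s(d) \ge \frac{2d}{d-1}$, which is a built-in feature of the exponent produced by Theorem \ref{LRS_L}; and second, that for every $s \ge p_s(d)$ the estimate \eqref{del-square} holds with a constant $C=C(\epsilon)$ that is uniform over all $\varphi \in \CC^N([-1,1])$. Once both are verified, Theorem \ref{thm:main} applied with this choice of $\fp$ yields \eqref{pqr} for every $\alpha > \alpha^*_{p_s}(p,q)$, which is the assertion of the corollary.

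The only content not immediately supplied by Theorem \ref{LRS_L} is the uniformity in $\varphi$, which is the step I would focus on. My approach is to reduce to a fixed-profile estimate by Fourier decomposition of $\varphi$. Since $\varphi$ is supported in $[-1,1]$, I extend it periodically on $[-L,L]$ for some fixed $L>1$ and expand
\[
 \varphi(s) = \sum_{k\in\Z} c_k\, e^{2\pi i k s/L},\qquad c_k = \frac{1}{2L}\int_{-L}^{L} \varphi(s)\, e^{-2\pi i k s/L}\,ds,
\]
so that with $\chi$ a fixed smooth cutoff equal to $1$ on $[-1,1]$ and supported in $[-L,L]$, one has $\varphi(s) = \chi(s)\sum_k c_k e^{2\pi i k s/L}$. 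By Minkowski's inequality in $L^s$,
\[
 \|\mathfrak{S}_\delta^\varphi f\|_{L^s(\R^d)} \le \sum_{k\in\Z} |c_k|\, \|\mathfrak{S}_\delta^{\chi_k} f\|_{L^s(\R^d)}, \qquad \chi_k(s):=\chi(s)\,e^{2\pi i k s/L}.
\]
Each $\chi_k$ is a fixed smooth cutoff modulated by a character, so Theorem \ref{LRS_L} applies to it and produces a bound of the form $\delta^{-\frac{d-2}{2}+\frac{d}{s}-\epsilon}$ times a constant growing at most polynomially in $k$; the polynomial growth can be tracked either by rescaling or by inspecting the dependence on the $C^N$-norm of the profile in the proof of Theorem \ref{LRS_L}. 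Integration by parts $N$ times in the definition of $c_k$ gives $|c_k|\lesssim (1+|k|)^{-N}\|\varphi\|_{C^N}$, so picking $N$ large enough forces the $k$-series to converge absolutely, yielding \eqref{del-square} with a constant depending only on $\|\varphi\|_{C^N}$.

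With the uniform square function estimate in hand, the corollary is immediate from Theorem \ref{thm:main}: the hypotheses $p,q\ge 2$, $\frac{1}{p}+\frac{1}{q}=\frac{1}{r}$, $\fp = p_s(d)\ge \frac{2d}{d-1}$, and \eqref{del-square} are all met, so \eqref{pqr} holds for every $\alpha > \alpha^*_{p_s}(p,q)$. The only nontrivial point, and the main obstacle, is the $\varphi$-uniform version of the square function estimate; once this (routine but necessary) reduction is done the result is a pure citation.
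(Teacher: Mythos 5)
Your proposal is correct and follows essentially the same route as the paper: the corollary is obtained by taking $\fp=p_s(d)$ (noting $p_s(d)\ge\frac{2d}{d-1}$) and feeding Theorem \ref{LRS_L} into Theorem \ref{thm:main}, with the only point requiring comment being the uniformity of the constant in \eqref{del-square} over $\varphi\in\CC^N([-1,1])$. The paper simply asserts that this constant depends only on a $C^N$-norm of $\varphi$, whereas you supply the standard Fourier-expansion/modulation argument reducing to a fixed profile; this is a reasonable (and slightly more explicit) justification of the same step.
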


In particular we note that  $\alpha^*_{p_s}(2,2)=1< \frac{2d+3}4$ for any $d\ge2$. Thus Corollary \ref{thm:bBR}  improves the previously known result due to Grafakos-D.He-Honz\'ik \cite{GHH1} in any dimension. 
Moreover, using a decay of the kernel of $\CB^\alpha_\lambda$, one easily sees that $\CB^\alpha_*$ is bounded from $L^p(\R^d)\times L^q(\R^d)$ to $L^r(\R^d)$ for all $1\le p,q,r\le \infty$ satisfying $\frac1p+\frac1q=\frac1r$ and $\alpha>d-\frac12$.
So, by further interpolation with  these
trivial bounds, we can  improve  the  range of $\alpha$ for which \eqref{pqr} holds.

To show Theorem \ref{thm:main} we mainly rely on the decomposition strategy from 
\cite{JLV} which  reduces the  problem  to dealing with the sublinear operator $f\to \|\FD^\varphi_{\delta,k}f\|_{l^\infty(\mathbb Z)}$, where $\FD^\varphi_{\delta,k}$ is a square function given by \eqref{md:square}.   
Each $\FD^\varphi_{\delta,k}$ contains a linear operator $S^\varphi_{0,\delta,2^k}$ of which multiplier is supported in the  balls of radius $2^k\delta^{1/2}$ which are centered at the origin. So the supports of  multipliers of  $S^\varphi_{0,\delta,2^k}$, $k\in\Z,$ do not overlap boundedly. To get around this lack of orthogonality near the origin
 we consider the operator $f\to \|\FD^\varphi_{\delta,k}f\|_{l^\infty(\mathbb Z)}$ instead of  
 $f\to \|\FD^\varphi_{\delta,k}f\|_{l^2(\mathbb Z)}$. Though the latter is more efficient in capturing cancellation due to orthogonality, the first works better for controlling the maximal function when 
orthogonality between the operators is relatively weak.

The rest of the paper is organized as follows. In Section 2 and Section 3 we provide a  proof of Theorem \ref{thm:bsm} and obtain boundedness of $\wt \CM$.  In Section 4 we reduce the problem of obtaining estimate for  $\CB^\alpha_*$ to  that for an auxiliary operator $\sup_{k\in\Z}\int_1^2 |\FB^\delta_{2^kt}| dt$. 
In Section 5 we introduce the square function $\FD^\varphi_{\delta,k}$ and obtain its maximal bound. By modifying the decomposition lemma in \cite{JLV}, we provide a proof of Proposition \ref{prop:Lp} in Section 6.

Throughout this paper, we use the notation $A\lesssim B$ for positive $A$ and $B$, which means that $A\le CB$ for some $C>0$ independent of $A$ and $B$. Sometimes we write $A\lesssim_\epsilon B$ when the implicit constant depends on  $\epsilon>0$.  We denote by $\CF^{-1}f$ the inverse Fourier transform of $f$, that is to say,
$\CF^{-1}f(x)=\int_{\R^d}e^{2\pi i x\cdot \xi}\widehat f(\xi)d\xi$. For $k\in\mathbb N$, $x\in\R^k,$ and $r>0$, $B^k(x,r)$ denotes the $k$-dimensional ball in $\R^k$ centered at $x$ and of radius $r.$

\section{Proof of Theorem \ref{thm:bsm} } 
In this section we prove $L^p\times L^q\to L^r$ boundedness of the bilinear spherical maximal function $\CM$.  As mentioned before, the boundedness is a direct consequence of a pointwise bound for $\CM$  and the known results regarding the (sub)linear spherical maximal function.  We start by making an  observation concerning pointwise bound for $\CM$.

\begin{lemma}\label{lem:pt}
 Let $d\ge 2$. For any $x\in \R^d$
\begin{equation}\label{pt_bound}
 \CM(f,g)(x)\,\lesssim  \,Mf(x)\, \sS g(x)  \text{ and }   \CM(f,g)(x)\,\lesssim  \sS f(x) Mg(x).
  \end{equation}
 Here $M$ is the Hardy-Littlewood maximal function and $\sS$ is the spherical maximal function.
\end{lemma}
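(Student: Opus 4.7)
The plan is to prove both pointwise bounds by a slicing argument that exploits the realization of $\bS^{2d-1}$ as the join of two copies of $\bS^{d-1}$. I parameterize $(y,z)\in\bS^{2d-1}$ by $(s\omega,\sqrt{1-s^2}\,\omega')$ with $s\in[0,1]$ and $\omega,\omega'\in\bS^{d-1}$, so that $|y|=s$ and $|z|=\sqrt{1-s^2}$. A standard polar computation on the join yields
\[
d\sigma_{2d-1}(y,z) \;=\; s^{d-1}(1-s^2)^{\frac{d-2}{2}}\,ds\,d\sigma_{d-1}(\omega)\,d\sigma_{d-1}(\omega').
\]
Substituting this into the definition of $\CM(f,g)(x)$ and introducing the unnormalized spherical average $A_R h(x) := \int_{\bS^{d-1}}|h(x-R\omega)|\,d\sigma_{d-1}(\omega)$, the inner double integral factors as $A_{ts}|f|(x)\cdot A_{t\sqrt{1-s^2}}|g|(x)$.

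Next, I peel off the $g$-factor using the trivial pointwise bound $A_{t\sqrt{1-s^2}}|g|(x)\le \sS g(x)$, valid for every $s\in[0,1]$ and $t>0$. Pulling $\sS g(x)$ outside both the $s$-integral and the $\sup_{t>0}$ reduces the first inequality to showing
\[
\sup_{t>0}\int_0^1 s^{d-1}(1-s^2)^{\frac{d-2}{2}}\,A_{ts}|f|(x)\,ds \;\lesssim\; Mf(x).
\]
For each fixed $t>0$ the substitution $\rho=ts$ rewrites the left side as $t^{-d}\int_0^t \rho^{d-1}(1-\rho^2/t^2)^{(d-2)/2}A_\rho|f|(x)\,d\rho$. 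Bounding $(1-\rho^2/t^2)^{(d-2)/2}\le 1$ (valid because $d\ge 2$) and recognizing $\int_0^t\rho^{d-1}A_\rho|f|(x)\,d\rho=\int_{B(x,t)}|f(y)|\,dy$ via polar coordinates, this is a dimensional constant times $|B(x,t)|^{-1}\int_{B(x,t)}|f|\,dy$, hence dominated by $Mf(x)$.

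Finally, the companion inequality $\CM(f,g)(x)\lesssim \sS f(x)\,Mg(x)$ is obtained for free from the symmetry of $d\sigma_{2d-1}$ under $(y,z)\leftrightarrow(z,y)$: the same argument with the two factors interchanged, equivalently slicing in $|z|$ rather than $|y|$, gives the claim. I do not foresee any genuine obstacle; the only step that requires any real computation is the join decomposition of the surface measure on $\bS^{2d-1}$, after which the proof reduces to a one-line change of variables plus the polar-coordinate identity that underlies $Mf$.
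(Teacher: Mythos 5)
Your proposal is correct and follows essentially the same route as the paper: both decompose $d\sigma_{2d-1}$ into lower-dimensional spherical slices (the paper derives the identical measure decomposition via the Dirac-delta/coarea argument, you via the join/polar formula, which after writing the ball integral in polar coordinates is the same identity), then bound the inner $(d-1)$-sphere average by $\sS g(x)$ and the remaining radial integral by $Mf(x)$ using $(1-|y|^2)^{(d-2)/2}\le 1$ for $d\ge 2$. The symmetry step for the second inequality also matches the paper.
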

\begin{proof} This pointwise estimate is obtained by a kind of {\it slicing argument} which decomposes the sphere $\mathbb S^{2d-1}$ into a family of lower dimensional spheres.

Let $F$ be a continuous function defined on $\R^{2d}$ and $(x,y)\in\R^d\times \R^d$. We 
first claim that  
\begin{equation}\label{decom}
\begin{aligned}
&\int_{\mathbb S^{2d-1}} F(x,y) d\sigma_{2d-1} (x,y)\\
 =&\int_{B^d(0,1)}\int_{\mathbb S^{d-1}} F(x, \sqrt{1-|x|^2}\,y) (1-|x|^2)^{\frac{d-2}2} d\sigma_{d-1}(y) dx.
\end{aligned}
\end{equation}
Here $d\sigma_{d-1}$ is the induced surface measure on $\mathbb S^{d-1}$. Assuming this for the moment, we proceed to show \eqref{pt_bound}. 
From the equality \eqref{decom}, we see that the bilinear spherical mean is controlled by
\begin{align*}
&\Big|\int_{\mathbb S^{2d-1}} f(x-ty)g(x-tz) d\sigma(y,z)\Big|\\
\le  & \int_{B^d(0,1)} |f(x-ty)| \int_{\mathbb S^{d-1}} |g(x- t\sqrt{1-|y|^2}\,z)|\, d\sigma_{d-1}(z) (1-|y|^2)^{\frac{d-2}2} dy\\
\le & \ \ \sS g(x) \,\int_{B^d(0,1)} |f(x-ty)| \, (1-|y|^2)^{\frac{d-2}2} dy.
\end{align*}
We note that $(1-|y|^2)^{\frac{d-2}2}\le 1$ on $B^d(0,1)$ because $d\ge2.$ Hence we get
\[  \int_{B^d(0,1)} |f(x-ty)|  (1-|y|^2)^{\frac{d-2}2} dy\le  \int_{B^d(0,1)} |f(x-ty)|  dy\lesssim Mf(x),\]
which yields the desired estimate $ \CM(f,g)(x)\,\lesssim  \,Mf(x)\, \sS g(x)$. The other one follows by interchanging the roles of $f$ and $g$. It remains to show \eqref{decom}.

To obtain \eqref{decom}, we make use of the Dirac measure on a hypersurface. Let $\Omega$ be a $(k-1)$-dimensional surface in $\R^k$ given by $\Omega =\{w\in\R^k: \Phi(w)=0\}.$ If $\nabla \Phi(w)\neq 0$ whenever $w\in \Omega$, then it is well-known \cite[p.136]{Ho} that 
\begin{equation}\label{dirac}
\int_{\R^k} G(w) \delta(\Phi) dw =\int_\Omega G(w) \frac{d\nu(w)}{|\nabla \Phi(w)|},
\end{equation}
where $d\nu$ is the induced surface measure on $\Omega$.
Since  $\mathbb S^{2d-1}\subset \R^d\times \R^d$ is the level set $\Phi^{-1}(0)$  of  $\Phi(x,y)=|x|^2+|y|^2-1$, by \eqref{dirac}, we have
\[ \int_{\mathbb S^{2d-1}} F(x,y) d\sigma_{2d-1}(x,y) =2\int_{\R^d\times \R^d} F(x,y)\delta(\Phi) dxdy.\] 
For any $x\in\R^d$, we set $\Phi_x(y)=\Phi(x,y)$ and $\Omega_x =\Phi_x^{-1}(0)\subset \R^d$. Then $\Omega_x$ is empty unless $|x|<1$, and $|\nabla \Phi_x(y)|=2\sqrt{1-|x|^2}\neq 0$ on $\Omega_x$ for $|x|<1$. By applying \eqref{dirac} again with   $\Phi_x(y)$, we obtain
{
\[ 2\int_{\R^d\times \R^d} F(x,y)\delta(\Phi) dxdy =\int_{B^d(0,1)}\int_{\Omega_x} F(x,y) \frac{d\nu_x(y)}{\sqrt{1-|x|^2}} dx,\]
}where $d\nu_x$ is the surface measure on $\Omega_x$. Since $\Omega_x$ is the $(d-1)$-dimensional sphere of radius $\sqrt{1-|x|^2}$, the equality \eqref{decom} follows from scaling.
\end{proof}

  Now we are ready to prove Theorem \ref{thm:bsm}.   

\begin{proof}[Proof of Theorem \ref{thm:bsm}] 
We first prove the necessity part for the estimate \eqref{eq:pqr}. As mentioned above,  the estimate \eqref{eq:pqr} fails unless $p,q$ and $r$ satisfy \eqref{holder} and $r>\frac d{2d-1}$, which follows from  scaling and the counterexample of Barrionevo-Grafakos-D.He-Honz\'ik- Oliveira \cite[Proposition 7]{BGHHO} (see the paragraph below of Theorem \ref{thm:bsm}). Hence, by symmetry, it is enough to show that $\CM$ is not bounded from $L^p\times L^q\to L^r$ with $p=1=r$ and $q=\infty.$  To the contrary, suppose that  $\CM$ was bounded from $L^1(\R^d)\times L^\infty (\R^d)$ to $L^1(\R^d)$.  If we take $g(x)=1$,   by \eqref{decom}  we have,  for any $f\in L^1(\R^d)$, 
\begin{align*}
\CM(f,g)(x) 
&=|\mathbb S^{d-1}|\sup_{t>0} \int_{B(0,1)} |f(x-ty)|  (1-|y|^2)^{\frac{d-2}2} dy\\
&\gtrsim M(f)(x).
\end{align*}
Thus, the assumption for $\CM$ yields
$ \|M(f)\|_{L^1(\R^d)} \lesssim \|f\|_{L^1(\R^d)}$ 
for all $f\in L^1(\R^d)$. This contradicts the fact that  the Hardy-Littlewood maximal function $M$ is not bounded  on $L^1(\R^d)$. 

We now deal with the sufficiency part for the estimate \eqref{eq:pqr}.
To obtain  boundedness of  $\CM$, we shall rely on bilinear interpolation. There is a lot of literature regarding multilinear interpolation  but  it is  usually required for the operator to be linear. See,  \cite{Ja}, \cite{Ch1} and \cite{BOS} for discussion on interpolation in quasi-Banach spaces. However the operator $\CM$ is sublinear. To avoid technicality related to interpolation of  bi-(sub)linear operator we consider  a linearized  operator.  For a nonnegative measurable function $\tau:\mathbb R^d\to \mathbb R_+=
\{t: t\ge 0\}$, define an operator 
\[  \mathcal A_\tau( f,g)(x) =\int_{\bS^{2d-1}}  f(x-\tau(x)y)g(x-\tau(x)z)  d\sigma_{2d-1}(y,z).\] 

To obtain $L^p\times L^q\to L^r$ boundedness of $\CM$, by the Kolmogorov-
Seliverstov-Plessner's stopping time argument it is sufficient to show   
\begin{equation*}\label{eq:pqr-linear}
 \|\mathcal A_\tau(f,g)\|_{L^{r}(\R^d)}\le C\|f\|_{L^{p}(\R^d)}\|g\|_{L^{q}(\R^d)} 
 \end{equation*}  
with $C$ bound independent of $\tau$. Since $|\mathcal A_\tau(f,g)|\le \mathcal M(f,g)$,  by Lemma  \ref{lem:pt}  we see $|\mathcal A_\tau( f,g)(x)|\lesssim Mf(x)\sS g(x)$. It is well known that, for $1<p\le \infty$,
\[ \|Mf \|_{L^p(\R^d)}\lesssim \|f\|_{L^p(\R^d)}, 
\quad  \quad\|Mf \|_{L^{1,\infty}(\R^d)}\lesssim \|f\|_{L^1(\R^d)}.\] 
Also,  we have  $\|\sS f\|_{L^p(\R^d)} \lesssim \|f\|_{L^p(\R^d)}$ if and only if $p>\frac d{d-1}$. Thus, by the pointwise bound for $\mathcal A_\tau$  and  H\"older's inequality,  we obtain for $1< p\le \infty$, $\frac d{d-1}<q\le \infty$ and $r$ satisfying \eqref{holder},
\begin{equation}\label{bsm_main}
 \| \mathcal A_\tau(f,g)\|_{L^r(\R^d)} \le  \|M  f \|_{L^p(\R^d)}\|\sS  g\|_{L^q(\R^d)} \lesssim \|f\|_{L^p(\R^d)}\|g\|_{L^q(\R^d)}.\end{equation}
 Similarly, using the weak $L^1$ bound of the Hardy-Littlewood maximal function, we also have, for $p=1$, $\frac d{d-1}<q\le \infty$ and $r$ satisfying \eqref{holder},
\begin{equation}\label{bsm_weak}
 \| \mathcal A_\tau(f,g)\|_{L^{r,\infty}(\R^d)} \lesssim \|f\|_{L^1(\R^d)}\|g\|_{L^q(\R^d)}. \end{equation}
 Then, interpolation between these estimates \eqref{bsm_weak} yields the estimate \eqref{bsm_main} for $p=1$, $\frac d{d-1}<q<\infty$ and $r$ satisfying \eqref{holder}.
By symmetry, the estimates \eqref{bsm_main} also hold when the roles of $f$ and $g$ are interchanged. Thus,  applying the bilinear (complex) interpolation, we have \eqref{bsm_main} for $p,q,r$ satisfying $\frac1r=\frac1p+\frac1q$ and $(\frac1p,\frac1q)$ in the gray region of Figure \ref{fig}, the closed pentagon $[AGHBO]$ excluding the closed interval $[GH]$ and the points $A$ and $B$.
Note that the implicit constant of \eqref{bsm_main} depends only on $p,q,$ and $d.$ Therefore we obtain all the estimates \eqref{eq:pqr} on  the optimal range.

We next consider the weak  estimates which are included in $(\bf a)$,  $(\bf b)$ and $(\bf c)$.   The estimates in $(\bf c)$  follow from $(\bf a)$, $(\bf b)$, and interpolation. As in the above, to show the maximal bound  \eqref{lorentz}  it is enough to show   
\begin{equation*}\label{eq:pqr-linear}
 \|\mathcal A_\tau(f,g)\|_{L^{r,u}(\R^d)}\le C\|f\|_{L^{p,s}(\R^d)}\|g\|_{L^{q,t}(\R^d)} \end{equation*}  
with a bound $C$ independent of $\tau$.  Thus, $(\bf a)$ follows from the estimate \eqref{bsm_weak} with $q=\infty$ and $r=1$. 
We now show $(\bf b)$ and $(\bf c)$. Let $d\ge 3$. Using the restricted weak type bound due to Bourgain \cite{B0}, 
we get 
\begin{align}
  \|\mathcal A_\tau(f,g) \|_{L^{\frac{d}{2d-1},\infty}}&\lesssim   \|Mf\|_{L^{1,\infty}}\|\sS g\|_{L^{\frac{d}{d-1},\infty}} \lesssim   \|f\|_{1} \|g\|_{L^{\frac d{d-1},1}},  
  \label{1:eq}
  \end{align}
  which shows $(\bf b)$. Interchanging the roles of $f$ and $g$ we also have
  \begin{align*}
\|\mathcal A_\tau(f,g) \|_{L^{\frac{d}{2d-1},\infty}}&\lesssim  \|f\|_{L^{\frac d{d-1},1}} \|g\|_{L^1}.
\end{align*}
Hence, trivially  $\|\mathcal A_\tau(\chi_F,\chi_G) \|_{L^{\frac{d}{2d-1},\infty}}\lesssim  |F|^\frac1p|G|^\frac1q$ for any measurable set
$F$ and $G$ and for $p,$ $q$ satisfying $1\le p,q\le \frac{d}{d-1}$ and $\frac1p+\frac1q=\frac{2d-1}d$. 
Since ${L^{\frac{d}{2d-1},\infty}}$ is $\frac{d}{2d-1}$-convex, the estimates imply 
 $\|\mathcal A_\tau(f,g) \|_{L^{\frac{d}{2d-1},\infty}}\lesssim  \|f\|_{L^{p,  \frac{d}{2d-1}}} \|g\|_{L^{q,\frac{d}{2d-1}}}$ for $p,$ $q$ satisfying   $\frac1p+\frac1q=\frac{2d-1}d$ and  $1\le q\le \frac{d}{d-1}$.
This can be further improved  with bilinear interpolation
to give 
\[  \|\mathcal \mathcal A_\tau(f,g) \|_{L^{\frac{d}{2d-1},\infty}}\lesssim  \|f\|_{L^{p, s}} \|g\|_{L^{q,t}} \]  
for $p,q,s,t$  satsifying $\frac1p+\frac1q=\frac{2d-1}d$ and  $1<q< \frac{d}{d-1}$ and $\frac1s+\frac1t=\frac{2d-1}d$. See Janson \cite{Ja} or Bak-Oberlin-Seeger \cite[Lemma 2.1 and Proposition 2.3]{BOS} for the  bilinear interpolation.
Thus we prove $(\bf c)$. 
\end{proof}

\begin{remark}
\label{strong}
The same  results as in  Theorem \ref{thm:bsm} also hold for the stronger bilinear maximal function $\mathscr M$ which is 
given by
\[\mathscr M(f,g)(x) = \sup_{t,s>0}\int_{\mathbb S^{2d-1}}| f(x-ty)g(x-sz) |d\sigma_{2d-1}(y,z),\quad f,g\in\CS(\R^d). \]
Indeed, we consider a linearized operator
\[\mathcal A_{\tau,\sigma}( f,g)(x) =\int_{\bS^{2d-1}}  f(x-\tau(x)y)g(x-\sigma(x)z)  d\sigma_{2d-1}(y,z)\] with arbitrary measurable functions $\tau$ and $\sigma$.  By the same slicing argument which yields \eqref{decom}, we have 
\[\mathcal A_{\tau,\sigma}(f,g)(x)\,\lesssim  \,Mf(x)\, \sS g(x)  \text{ and }   \mathcal A_{\tau,\sigma}(f,g)(x)\,\lesssim  \sS f(x) Mg(x),\]
hence the previous  argument works  for the operator $\mathscr M$.
\end{remark}

The  results on the spherical maximal function have been extended to maximal averages over  general hypersurfaces  \cite{ SS, sogge, sogge-stein} which vary depending each point.  Naturally, in a similar manner one may consider a bilinear maximal operator $\mathfrak M$  given  by 
\[\mathfrak M(f,g)(x) =\sup_{0<t\le 1}\Big|\int_{\R^{2d}} f(y)g(z) \delta(\Phi_t(x,y,z)) \psi_t(x,y,z) dydz\Big|,\quad f,g\in \CS(\R^d),\]
where $\Phi_t$ and $\psi_t$ are certain smooth functions subject to suitable conditions (for example, see \cite{sogge, sogge-stein}).
It seems to be an interesting problem to characterize the exponents $p,q,r$ for which  $\mathfrak M$ is bounded from $L^p(\R^d)\times L^q(\R^d)$ to $L^r(\R^d)$, however we 
do not attempt to do it in the present paper.

\begin{remark}\label{multi} Our method based on the slicing argument also extends to the general $k$-linear case.
More precisely, for any $k\ge2$,   let   the $k$-(sub)linear spherical maximal function $\CM_k$ be given by
\[ \CM_k(f_1,\cdots, f_k)(x)=\sup_{t>0} \int_{\mathbb S^{kd-1}}\prod_{j=1}^k f_j(x-ty_j)d\sigma_{kd-1}(y_1,\cdots, y_k).\]
Applying our argument inductively, we see that  $\CM_k$ is bounded from $L^{p_1}(\R^d)\times \cdots \times L^{p_k}(\R^d)$ to $L^r(\R^d)$ if  $1<p_1,\cdots, p_k\le \infty$, $\frac1{p_1}+\cdots+ \frac1{p_k}=\frac1r$, $r>\frac d{kd-1}$ and $d\ge2.$
\end{remark}

\section{Localized bilinear spherical maximal function}\label{sec:loc}
In this section we  study  the localized bilinear spherical maximal function $\wt\CM$ defined by \eqref{tu_bsm}. 
Using the $L^p$-improving property for $\wt\sS$, we show $L^p\times L^q\to L^r$ boundedness of $\wt\CM$ for exponents  $p,q,r$ which do not satisfy  the H\"older relation. We  also obtain  necessary conditions on $p,q,r$   for  $L^p\times L^q\to L^r$ boundendess of $\wt\CM$. See  Propositions \ref{pro:mod} and \ref{pro:low1} below.  Consequently,
we obtain the sharp range of $p,q$ while $r$ is restricted in a certain region.

%%%%%%%%%%%%%%%%%%%%%%%%%%%%%%%%%%%%%%%%%%%%%%%%%%

\begin{figure}
\begin{tikzpicture} 
[scale=0.55]
\begin{scope}
	\draw [<->] (0,11.5) node[left]{$\frac1r\,$}--(0,0) node[below]{${\mathbf O}$}--(8.4,0) node[right]{$\frac1p$};
	\path[fill=gray!30] (0,0)--(4,0)--(6, 2.6)--(6,7.8) ;
	\draw (0,0) rectangle (8,10.4);
	\node at (0,10.4)[left] {$2$};
	\draw (0,0)--(6,7.8);
	\draw[dash pattern={ on 2pt off 1pt}] (0,0)--(8,10.4); 
	\draw[dash pattern={ on 2pt off 1pt}] (0,7.8)--(6,7.8); 
	\draw[dash pattern={ on 2pt off 1pt}] (0,5.2)--(8,5.2); 
	\node at (0,5.2)[left] {$1$};  
	\draw[dash pattern={ on 2pt off 1pt}] (0,7.8) node[left]{$\frac32$}--(6,7.8); 
	\draw[dash pattern={ on 2pt off 1pt}] (6,7.8)--(6,0); 
	\draw [fill] (6,2.6) circle [radius=0.05] node[right]{$\mathbf C$};
	\draw[dash pattern={ on 2pt off 1pt}] (0,1.3) node[left]{$\frac14$}--(6,1.3); 
	\draw[dash pattern={ on 2pt off 1pt}] (4,0)--(6,2.6); 
	\draw [fill] (4,0) circle [radius=0.05] node[below]{$\mathbf A$};
	\draw [fill] (6,1.3) circle [radius=0.05] node[right]{$\mathbf B$};
	\draw [fill] (6, 7.8) circle [radius=0.05] node[above left]{$\mathbf D$};
	\draw[pattern=north west lines, pattern color=blue] (4,0) -- (6,1.3)--(6,2.6);
	\draw (4,-1.5) node  {(I) $d=2$};
\end{scope}
\begin{scope}[shift={(12,0)}]
	\draw [<->] (0,11.5) node[left]{$\frac1r\,$}--(0,0) node[below]{${\mathbf O}$}--(8.4,0) node[right]{$\frac1p$};
	\path[fill=gray!30] (0,0)--(4,0)--(6.278,0.562)--(90/13,14/10)--(90/13,9); 
	\node at (0,10.4)[left] {$2$};
	\draw (0,0)--(90/13,9);
	\draw (0,0) rectangle (8,10.4);
	\draw[dash pattern={ on 2pt off 1pt}] (0,0)--(8,10.4); 
	\draw[dash pattern={ on 2pt off 1pt}] (0,5.2)--(8,5.2); 
	\node at (0,5.2)[left] {$1$};  
	\draw[dash pattern={ on 2pt off 1pt}] (0,9) node[left]{$\frac{2d-1}d$}--(90/13,9); 
	\draw[dash pattern={ on 2pt off 1pt}] (90/13,9)--(90/13,0);  
	\draw[dash pattern={ on 2pt off 1pt}] (4,0)--(90/13, 1976/2743); 
	\draw[dash pattern={ on 2pt off 1pt}] (6.278,0.562)--(90/13,14/10); 
	\draw [fill] (4,0) circle [radius=0.05] node[below]{$\mathbf A$};
	\draw [fill] (90/13,1976/2743) circle [radius=0.05] node[right]{$\mathbf B$};
	\draw [fill] (90/13,14/10) circle [radius=0.05] node[above left]{$\mathbf C$};
	\draw [fill] (90/13, 9) circle [radius=0.05] node[above left]{$\mathbf D$};
	\draw [fill] (6.278,0.562) circle [radius=0.05] node[above left]{$\mathbf E$};
	\draw[pattern=north west lines, pattern color=blue] (6.278,0.562)-- (90/13, 1976/2743)--(90/13,14/10);
	\draw(4,-1.5) node {(II) $d\ge 3$};
	\end{scope}
\end{tikzpicture}
\caption{The range of $p$ and $r$ for  $\wt\CM : L^p\times L^p \to L^r$.  Proposition \ref{pro:mod} and  Proposition \ref{pro:low1} give boundedness (the gray region) and unboundedness (the white region), respectively.
Here, ${\mathbf O}=(0,0)$, ${\mathbf A}=(\frac{1}{2},0)$, ${\mathbf B}=(\frac{2d-1}{2d}, \frac{d-1}{d^2})$, $ {\mathbf C}=(\frac{2d-1}{2d},\frac1d)$, $ {\mathbf D}=(\frac{2d-1}{2d},\frac{2d-1}d)$, and ${\mathbf E}=(\frac{2d-3}{2(d-1)}, \frac{d-2}{d(d-1)})$.
}
\label{fig_loc}
\end{figure}
%%%%%%%%%%%%%%%%%%%%%%%%%%%%%%%%%%%%%%%%%%%%%%%%%%%%%%%%
\begin{theorem}\label{thm:local} 
Let $d\ge 2$, $1\le p,q\le \infty $, and $0<r\le d$ or $\frac{d(d-1)}{d-2}\le r<\infty$. Then the estimate 
\begin{equation}\label{loc_pqr} 
\|\wt\CM(f,g)\|_{L^r(\R^d)}\le C \|f\|_{L^p(\R^d)}\|g\|_{L^q(\R^d)}
\end{equation}
holds for $\frac1r\le \frac1p+\frac1q < \min\{\frac{2d-1}d,1+\frac dr \}$. Conversely, the estimate \eqref{loc_pqr} holds only if $\frac1r\le \frac1p+\frac1q \le \min\{\frac{2d-1}d,1+\frac dr \}$.  Furthermore, when $r=\infty$, the estimate \eqref{loc_pqr} holds if and only if $0\le \frac1p+\frac1q\le 1$ for all  $d\ge2.$
\end{theorem}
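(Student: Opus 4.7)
The plan is to combine a localized version of the pointwise bound of Lemma~\ref{lem:pt} with the $L^p$-improving estimates for $\wt\sS$ recorded in Theorem~\ref{local_sm}. Running the slicing argument of Lemma~\ref{lem:pt} with the supremum restricted to $t\in[1,2]$ (splitting the slicing integral according to $|y|$ and absorbing the Jacobian $(1-|y|^2)^{(d-2)/2}$ on the region where the rescaled radius $s=t\sqrt{1-|y|^2}$ escapes $[1,2]$) yields the pointwise bounds
\[ \wt\CM(f,g)(x)\lesssim \wt M f(x)\,\wt\sS g(x),\qquad \wt\CM(f,g)(x)\lesssim \wt\sS f(x)\,\wt M g(x), \]
where $\wt M$ is the Hardy--Littlewood maximal operator truncated to $t\in[1,2]$. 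Since $\wt M f\lesssim |f|*\chi_{B(0,2)}$, Young's inequality gives the trivial $L^p$-improving $\|\wt M f\|_{L^v}\lesssim\|f\|_{L^u}$ for every $1\le u\le v\le\infty$.

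For sufficiency, H\"older's inequality gives $\|\wt\CM(f,g)\|_{L^r}\lesssim\|\wt M f\|_{L^a}\|\wt\sS g\|_{L^b}$ with $1/a+1/b=1/r$, where one may take any $a\ge p$ while $(q,b)$ must lie in the $L^p$-improving polygon for $\wt\sS$ furnished by Theorem~\ref{local_sm}. Linearizing $\wt\CM$ by a stopping-time argument exactly as in the proof of Theorem~\ref{thm:bsm} to legitimize bilinear (complex) interpolation, and taking the convex hull of the two symmetric families of bounds, one obtains \eqref{loc_pqr} on a region of the $(1/p,1/q)$-plane depending on~$r$. A vertex-by-vertex check against the $L^p$-improving polygon for $\wt\sS$ (in particular the Sobolev-type endpoint $1/u-1/v=1/d$ of Schlag--Sogge \cite{SS} and Lee \cite{Lee}) shows that this region equals $\frac1r\le\frac1p+\frac1q<\min\{\frac{2d-1}d,1+\frac dr\}$ exactly when $r\in(0,d]\cup[\frac{d(d-1)}{d-2},\infty)$; for $r$ in the intermediate gap, the corner of the $L^p$-improving polygon of $\wt\sS$ that would be needed is still unresolved, which is what forces the excluded interval of $r$.

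For necessity, the condition $\frac1r\le\frac1p+\frac1q$ follows by testing with $f=g=\chi_{B(0,R)}$ and letting $R\to\infty$. The condition $\frac1p+\frac1q\le\frac{2d-1}d$ follows from the Stein-type concentration example of \cite[Prop.~7]{BGHHO}, which uses only the scale $t\sim 1$ and hence applies verbatim to $\wt\CM$. The $r=\infty$ case $\frac1p+\frac1q\le 1$ is forced by H\"older. Finally, the condition $\frac1p+\frac1q\le 1+\frac dr$ is obtained from a Knapp-type rescaling: one takes $f,g$ indicators of a thin cap whose aspect ratio is to be chosen, lower bounds the $(2d-1)$-dimensional surface measure of the set of $(y,z)\in\bS^{2d-1}$ for which both factors fall inside the cap via the slicing formula \eqref{decom}, and optimises. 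The main obstacles are twofold: (i) the vertex computation matching the $L^p$-improving polygon for $\wt\sS$ to the two constraints $\frac{2d-1}d$ and $1+\frac dr$, which forces the excluded range of $r$; and (ii) the Knapp calculation, where the interaction of two thin caps on $\bS^{2d-1}$ must be tracked through the slicing formula to produce the correct $d/r$ factor.
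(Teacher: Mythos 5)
The key step of your sufficiency argument --- the claimed localized pointwise bound $\wt\CM(f,g)(x)\lesssim \wt{M}f(x)\,\wt\sS g(x)$ --- is false, and the failure is not something that can be repaired by ``absorbing the Jacobian.'' In the slicing formula \eqref{decom} the inner spherical average has radius $t\sqrt{1-|y|^2}$, which ranges over all of $(0,2]$ as $|y|\to 1$, while $\wt\sS g$ only sees radii in $[1,2]$. Concretely, take $f\equiv 1$ near the origin, $g=\chi_{B^d(0,\epsilon)}$ and $x=0$: the region $\sqrt{1-|y|^2}\sim\epsilon$ contributes $\gtrsim \epsilon^{d}$ to $\wt\CM(f,g)(0)$, whereas $\wt\sS g(0)=0$ for $\epsilon<1$. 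The paper's substitute (inequality \eqref{eq:d2}) decomposes $B^d(0,1)$ dyadically in $1-|y|$, rescales each piece back to a unit-scale $\wt\sS$, and is left with the series $\sum_{l}2^{-l(d-2)/2}\,2^{\frac{ld}{2}(\frac1q-\frac1r)}$ after applying the $L^q\to L^r$ improving bound; this converges only when $\frac1q-\frac1r<\frac{d-2}{d}$. That convergence condition is precisely what produces the extra constraint $\frac1p+\frac1q<\frac1r+\frac{2(d-1)}{d}$ in Proposition \ref{pro:mod}, and a short computation shows this constraint is non-binding exactly when $r\le d$ or $r\ge\frac{d(d-1)}{d-2}$. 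So your diagnosis of the excluded interval of $r$ is also wrong: it does not come from unresolved vertices of the $L^p$-improving polygon for $\wt\sS$, but from the small-radius contributions that your pointwise bound silently discards. Note in particular that for $d=2$ the Jacobian factor $2^{-l(d-2)/2}$ equals $1$, there is no decay at all, and the paper can only use the crude bound \eqref{eq:d1} with the full (non-improving) maximal function $\sS$ --- your scheme gives nothing new in the plane.

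The necessity part of your proposal is closer to the mark: the thin-annulus (Knapp-type) example for $\frac1p+\frac1q\le 1+\frac dr$ and a concentration example at scale $\delta$ with $t\sim 1$ for $\frac1p+\frac1q\le\frac{2d-1}{d}$ are exactly what Proposition \ref{pro:low1} carries out (though the paper constructs the latter directly with balls of radius $\sim\delta$ tested on the annulus $|x|\sim\frac1{\sqrt2}$ rather than quoting \cite{BGHHO}, whose example is set up for the global operator under the H\"older relation and would need to be redone). But as it stands the sufficiency half of your proof has a genuine gap at its first step, and the entire matching of exponents downstream of it is built on an inequality that does not hold.
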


In particular,  when $d=2$, Theorem \ref{thm:local}  gives \eqref{loc_pqr} for $1\le p,q\le \infty $, and $0<r\le2$ provided that  $ \frac1r\le \frac1p+\frac1q <  \frac 32$.  
Theorem \ref{thm:local} follows from Proposition \ref{pro:mod} and Proposition \ref{pro:low1}.

\begin{prop}\label{pro:mod} Let  $d\ge2$, $1\le p,q\le \infty$ and $0<r\le \infty.$ Then the estimate \eqref{loc_pqr} holds if $r> \frac d{2d-1}$ and $\frac1r\le \frac1p+\frac1q< \min\{ 1+\frac dr, \frac{2d-1}d, \frac1r+\frac{2(d-1)}d\}$ for  $d\ge3$, and if  $r>\frac23$ and $\frac1r\le \frac1p+\frac1q <\min\{ 1+\frac1r, \frac32\}$ for $d=2$. {Moreover, the estimate \eqref{loc_pqr} also holds for $\frac1p+\frac1q=1+\frac1r$ if $r>2$ for $d=2$ and $r=\infty$ for $d\ge3$.}
\end{prop}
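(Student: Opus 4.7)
The plan is to adapt the pointwise slicing bound of Lemma \ref{lem:pt} to the localized operator $\wt\CM$, combine it with the $L^p$-improving properties of the localized (linear) spherical maximal function supplied by Theorem \ref{local_sm}, and then close the argument by bilinear interpolation. Thus the argument parallels the proof of Theorem \ref{thm:bsm}, but with Stein's $L^p$-boundedness of $\sS$ replaced by the Schlag--Sogge--Lee $L^p\!\to\! L^{q}$ bounds for $\wt\sS$.

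First, I would replay the slicing identity \eqref{decom} inside $\wt\CM$, keeping the constraint $t\in[1,2]$. Since $s = t\sqrt{1-|y|^2}$ ranges through $[0,2]$ rather than $[1,2]$, this produces the pointwise estimate
\begin{equation*}
\wt\CM(f,g)(x) \,\lesssim\, \wt M f(x)\cdot \wt\sS_{\!\ast} g(x),
\end{equation*}
where $\wt M f \lesssim |f|\ast\chi_{B(0,2)}$ is a Young-type averaging and $\wt\sS_{\!\ast} g(x)=\sup_{0<s\le 2}\int_{\mathbb S^{d-1}}|g(x-sz)|\,d\sigma_{d-1}(z)$; the operator $\wt\sS_{\!\ast}$ obeys the same $L^u\!\to\! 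L^v$ bounds as $\wt\sS$ by dyadically decomposing $s$ and rescaling. Symmetrically, one also has $\wt\CM(f,g)\lesssim \wt\sS_{\!\ast}f\cdot \wt M g$.

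Next, for any splitting $\tfrac1r=\tfrac1{p_1}+\tfrac1{q_1}$, Hölder gives $\|\wt\CM(f,g)\|_r \le \|\wt M f\|_{p_1}\|\wt\sS_{\!\ast} g\|_{q_1}$. Young's inequality bounds $\wt M:L^p\to L^{p_1}$ whenever $p\le p_1$, i.e.\ on the full Young trapezoid in $(1/p,1/p_1)$; Theorem \ref{local_sm} bounds $\wt\sS_{\!\ast}:L^q\to L^{q_1}$ on the sharp Schlag--Sogge--Lee trapezoid in $(1/q,1/q_1)$ whose nontrivial corner sits at $\bigl(\tfrac{d-1}{d},\tfrac{d-1}{d^2}\bigr)$ for $d\ge 3$ and at $\bigl(\tfrac12,\tfrac15\bigr)$ for $d=2$. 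Choosing the splitting so that $(1/q,1/q_1)$ lies at each relevant vertex of the Schlag--Sogge--Lee trapezoid (and $(1/p,1/p_1)$ at each relevant vertex of Young's trapezoid), I obtain a finite collection of strong-type endpoint bounds for $\wt\CM$; the two pointwise inequalities above produce a symmetric pair of such families. Bilinear real/complex interpolation (as in \cite{Ja} or \cite[Lemma 2.1, Proposition 2.3]{BOS}) among these endpoints then fills the open polytope in $(1/p,1/q,1/r)$ carved out by $\tfrac1r\le \tfrac1p+\tfrac1q< \min\{1+\tfrac{d}{r},\,\tfrac{2d-1}{d},\,\tfrac1r+\tfrac{2(d-1)}{d}\}$, with the first bound coming from Young, the second from the Stein threshold for $\wt\sS_{\!\ast}$ at $L^{d/(d-1)}$, and the third from the Schlag--Sogge corner of the $L^p$-improving trapezoid. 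For the extra closed boundary line $\tfrac1p+\tfrac1q=1+\tfrac1r$ (with $r>2$ when $d=2$ and $r=\infty$ when $d\ge 3$) I would bypass $L^p$-improving altogether: apply Hölder on $\mathbb S^{2d-1}$ with an exponent $s$ chosen so that after slicing each factor becomes a maximal average of $|\cdot|^s$, which is then estimated by the Hardy--Littlewood maximal function in $L^{r/s}$.

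The main obstacle I expect is geometric bookkeeping rather than any deep new estimate. One must verify that the tensor product of the two trapezoids (for $\wt M$ and for $\wt\sS_{\!\ast}$), intersected with the line $\tfrac1r=\tfrac1{p_1}+\tfrac1{q_1}$ and its symmetric counterpart, and then passed through bilinear interpolation, produces exactly the three-sided region in the statement—in particular that the Schlag--Sogge corner is the tight vertex producing the constraint $\tfrac1p+\tfrac1q<\tfrac1r+\tfrac{2(d-1)}{d}$. A minor subtlety is that Theorem \ref{local_sm} leaves a few endpoints of the $L^p$-improving region open; since the region in Proposition \ref{pro:mod} is described by strict inequalities, these missing endpoints can be absorbed by interpolating with an arbitrarily close interior point, so they do not affect the conclusion.
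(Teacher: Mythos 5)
There is a genuine gap in the key step. Your claim that the operator $\wt\sS_{\!\ast}g(x)=\sup_{0<s\le 2}\int_{\mathbb S^{d-1}}|g(x-sz)|\,d\sigma_{d-1}(z)$ ``obeys the same $L^u\to L^v$ bounds as $\wt\sS$ by dyadically decomposing $s$ and rescaling'' is false whenever $u<v$. If $T_l$ denotes the piece with $s\sim 2^{-l}$, then $T_lg(x)=\wt\sS\bigl(g(2^{-l}\cdot)\bigr)(2^lx)$ up to constants, so rescaling an $L^u\to L^v$ bound for $\wt\sS$ gives $\|T_l\|_{L^u\to L^v}\sim 2^{ld(\frac1u-\frac1v)}\|\wt\sS\|_{L^u\to L^v}$, which blows up as $l\to\infty$; the sum over small scales diverges. (Testing $\wt\sS_{\!\ast}$ on $\chi_{B(0,\epsilon)}$ confirms that no $L^u\to L^v$ improving with $u<v$ can hold for the sup over $0<s\le2$.) Consequently your pointwise bound only yields what the paper gets from its inequality \eqref{eq:d1}, namely Young's inequality in the $f$-slot paired with the \emph{global} maximal bound $\sS:L^q\to L^q$, $q>\frac d{d-1}$. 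That recovers the region $\mathfrak T(r)$ and is enough for $d=2$, but for $d\ge3$ it misses the part of the claimed region with $\frac1p+\frac1q>1+\frac1r$ (e.g.\ up to $1+\frac dr$ when $\frac1r<\frac{d-2}{d(d-1)}$), and it cannot produce the constraint $\frac1p+\frac1q<\frac1r+\frac{2(d-1)}d$ at all.

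The paper's fix is exactly at this point: instead of taking the supremum over the radius $s=t\sqrt{1-|y|^2}$ inside, it keeps the dyadic decomposition of $B^d(0,1)$ near its boundary explicit (inequality \eqref{eq:d2}), so that each annulus $1-|y|\sim 2^{-l}$ contributes a rescaled copy of $\wt\sS$ weighted by $2^{-l(d-2)/2}$ coming from the Jacobian factor $(1-|y|^2)^{(d-2)/2}$. After applying the $L^q\to L^r$ improving bound of Theorem \ref{local_sm} to each piece, the resulting geometric series $\sum_l 2^{-\frac l2(d-2-d(\frac1q-\frac1r))}$ converges precisely when $\frac1q-\frac1r<\frac{d-2}d$, and it is this summability condition (together with symmetry and interpolation) that generates the third side $\frac1r+\frac{2(d-1)}d$ of the region. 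You need to incorporate this decaying weight rather than discard it into a single maximal operator. Separately, your sketch for the borderline $\frac1p+\frac1q=1+\frac1r$ via H\"older on $\mathbb S^{2d-1}$ is not developed enough to assess; the paper obtains that case simply from $|f|\ast\chi_B:L^p\to L^\infty$ together with $\sS:L^r\to L^r$ for $r>\frac d{d-1}$.
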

\begin{prop}\label{pro:low1} Let $d\ge 2$, $1\le p,q \le \infty$, and $0< r\le \infty.$ If the estimate \eqref{loc_pqr} holds then $\frac1r\le \frac1p+\frac1q\le \min\{ \frac{2d-1}d,\, 1+\frac dr\}.$
\end{prop}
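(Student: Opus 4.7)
The plan is to establish each of the three necessary conditions by a dedicated test pair $(f,g)$ and a scaling argument.

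Conditions $(1)$ and $(2)$ come from standard large-scale and small-scale tests. For $\frac{1}{r}\le \frac{1}{p}+\frac{1}{q}$, take $f=g=\chi_{B(0,R)}$ with $R\to\infty$: since $|y|,|z|\le 1$ on $\bS^{2d-1}$ and $t\in[1,2]$, the integrand in \eqref{tu_bsm} equals $1$ for $|x|<R-2$, so $\wt\CM(f,g)\gtrsim 1$ on a ball of volume $\sim R^d$, and comparing with $\|f\|_p\|g\|_q\sim R^{d/p+d/q}$ forces the claim. For $\frac{1}{p}+\frac{1}{q}\le\frac{2d-1}{d}$, use the Knapp-type example $f=g=\chi_{B(0,\delta)}$ of \cite{BGHHO}. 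Applying the slicing formula \eqref{decom} to $\int\chi_{|x-ty|<\delta}\chi_{|x-tz|<\delta}\,d\sigma_{2d-1}$, the outer $y$-integration gives a $d$-ball of volume $\sim\delta^d$, while the inner integral $\int_{\bS^{d-1}}\chi_{|x-t\sqrt{1-|y|^2}z'|<\delta}\,d\sigma(z')$ is a $(d-1)$-cap of measure $\sim\delta^{d-1}$, both nonvanishing exactly when $|x|=t/\sqrt 2+O(\delta)$. Sweeping $t\in[1,2]$ yields $\wt\CM(f,g)\gtrsim\delta^{2d-1}$ on the annulus $\{|x|\in[1/\sqrt 2,\sqrt 2]\}$ of volume $\sim 1$; comparison with $\|f\|_p\|g\|_q\sim\delta^{d/p+d/q}$ and $\delta\to 0$ gives condition $(2)$.

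The third condition $\frac{1}{p}+\frac{1}{q}\le 1+\frac{d}{r}$ is the most delicate; I would test against the normalized thin shell $f=g=|A_\delta|^{-1}\chi_{A_\delta}$ with $A_\delta=\{x\in\R^d:|\,|x|-1/\sqrt 2\,|\le\delta\}$, so $\|f\|_p\|g\|_q\sim\delta^{1/p+1/q-2}$. The key observation is that at $(x,t)=(0,1)$ the two shell constraints $|y|,|z|\in[1/\sqrt 2-\delta,1/\sqrt 2+\delta]$ collapse to a single one modulo $|y|^2+|z|^2=1$: writing $|y|=1/\sqrt 2+a$ with $|a|\le\delta$ forces $|z|=1/\sqrt 2-a+O(\delta^2)$, automatically satisfying the second shell. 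Via \eqref{decom} the admissible set on $\bS^{2d-1}$ has surface measure $\sim\delta$, giving $\wt\CM(f,g)(0)\sim\delta^{-1}$. The first-order expansion $y=(1/\sqrt 2+a)e$, $z=(1/\sqrt 2+b)e'$ with $e,e'\in\bS^{d-1}$ then reduces the three constraints to $a\approx x\cdot e$, $b\approx x\cdot e'$, and (from the sphere) $a+b\approx 0$, so consistency demands $x\cdot(e+e')=O(\delta)$; this is automatic throughout the ball $\{|x|\lesssim\delta\}$, and $\wt\CM(f,g)(x)\gtrsim\delta^{-1}$ persists there on a set of volume $\sim\delta^d$. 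Hence $\|\wt\CM(f,g)\|_r\gtrsim\delta^{d/r-1}$, which upon comparison with $\|f\|_p\|g\|_q$ and $\delta\to 0$ yields the bound.

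The main obstacle is the perturbation analysis in the third step. I must verify that the shell-redundancy genuinely \emph{decouples} when $|x|\gg\delta$ (otherwise the test would produce the too-strong constraint $\frac{1}{p}+\frac{1}{q}\le 1$), and that allowing $t$ to vary in $[1,2]$ does not create new global degeneracies at $x\ne 0$. For the former, when $|x|\gg\delta$ the consistency equation $x\cdot(e+e')=O(\delta)$ cuts the $(2d-2)$-dimensional $(e,e')$ set to measure $\sim\delta/|x|$, producing $\wt\CM(f,g)(x)\sim 1/|x|$, whose $L^r$ contribution is subdominant precisely when $r>d$ (and only for $r>d$ is condition $(3)$ nontrivial). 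For the latter, a direct inspection of the gradients of the shell and sphere constraints shows that the two shells centered at $x/t$ coincide globally on $\bS^{2d-1}$ (modulo the sphere constraint) only when $x/t=0$, so the singular set of $\wt\CM(f,g)$ remains confined to a $\delta$-neighborhood of the origin.
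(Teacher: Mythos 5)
Your proposal is correct and follows essentially the same route as the paper: large-scale balls (equivalently, translation invariance) for $\frac1r\le\frac1p+\frac1q$, small balls $\chi_{B(0,\delta)}$ combined with the slicing formula \eqref{decom} for $\frac1p+\frac1q\le\frac{2d-1}{d}$, and thin spherical shells of width $\delta$ at radius $1/\sqrt2$ evaluated near $x=0$ for $\frac1p+\frac1q\le 1+\frac dr$. The only cosmetic differences are your normalization of the shells and your (unnecessary for necessity) analysis of the region $|x|\gg\delta$; note also that, as in the paper, one should take the second ball/shell slightly larger (or restrict the $y$-integration) so that the relevant caps are genuinely nonempty.
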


Figure \ref{fig_loc} shows  the range of $p$ and $r$ for which $\wt\CM$ is bounded from $L^p(\R^d)\times L^p(\R^d)$ to $L^r(\R^d)$.   Boundedness of $\wt\CM$ remains open when  $(\frac1p,\frac1r)$ is in the slashed region (the closed triangles with vertices $\bf{A,B,C}$ for $d=2$ and $\bf{E,B,C}$ for $d\ge3$) and the dashed borderlines in Figure \ref{fig_loc}.

We now prove Proposition \ref{pro:mod} and Proposition \ref{pro:low1}. We begin with recalling the known bounds for $\wt\sS$. 
For  $d\ge 2$,  let us set $\mathfrak V_1^d=(0,0)$, $\mathfrak V^d_2=(\frac{d-1}d,\frac{d-1}d),\, \mathfrak V^d_3=(\frac{d-1}d,\frac1d),$ and $\mathfrak V^d_4=(\frac{d^2-d}{d^2+1},\frac{d-1}{d^2+1}).$  By $\Delta(d)$ we denote  the closed quadrangle with vertices $\mathfrak V^d_1,\mathfrak V^d_2, \mathfrak V^d_3,\mathfrak V^d_4$ when $d\ge3$ and the closed triangle with $\mathfrak V^d_1, \mathfrak V^d_2=\mathfrak V^d_3,\mathfrak V^d_4$ when $d=2$.

\begin{theorem}[\cite{Lee}]\label{local_sm} Let $d\ge2$ and $1\le p,q\le \infty$. Then 
\begin{equation}\label{eq:tu_sm}
\|\wt\sS f\|_{L^q(\R^d)}\le C\|f\|_{L^p(\R^d)}
\end{equation}
holds if  $(\frac1p,\frac1q)$ is in $\Delta(d)\setminus\{\mathfrak V^d_2,\mathfrak V^d_3, \mathfrak V^d_4\}$. Conversely, if the estimate \eqref{eq:tu_sm} holds, then $(\frac1p,\frac1q)\in \Delta(d)\setminus\{\mathfrak V^d_2\}$. If $d=2$,  the restricted weak type $(p,q)$ bound for $\wt\sS$ holds with $(\frac1p,\frac1q)=\mathfrak V^d_4$ and if $d\ge 3$, 
with $(\frac1p,\frac1q)=\mathfrak V^d_2,\mathfrak V^d_3,$ and $\mathfrak V^d_4$.   
\end{theorem}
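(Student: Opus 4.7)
The plan is to frequency-localize $\wt\sS$ and reduce the estimates to local smoothing bounds for the half-wave propagator, in the spirit of Mockenhaupt-Seeger-Sogge \cite{MSS} and Schlag-Sogge \cite{SS}. Using the stationary phase expansion $\widehat{d\sigma_{d-1}}(\xi)=\sum_\pm a_\pm(\xi)e^{\pm 2\pi i|\xi|}$, with symbols $a_\pm$ of order $-(d-1)/2$, combined with a Littlewood-Paley decomposition, I would split $\wt\sS f \lesssim Mf + \sum_{k\ge 1}\wt\sS_k f$, where $\wt\sS_k$ is localized to frequencies $|\xi|\sim 2^k$ and involves the half-wave operator at time $t\in[1,2]$. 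To remove the supremum in $t$, apply the Sobolev embedding $W^{1,s}([1,2])\hookrightarrow L^\infty([1,2])$ pointwise in $x$, reducing the problem to $L^p_x\to L^s_tL^q_x$ bounds for the frequency-localized averaging operator $A_t^k$ and its $t$-derivative, with explicit dependence on $k$.

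Next, I would establish the vertex estimates and interpolate. The trivial $L^\infty$-bound handles $\mathfrak V_1^d$. At $\mathfrak V_2^d$ the restricted weak type estimate for $d\ge 3$ is Bourgain's endpoint spherical maximal bound; failure at $d=2$ follows from Stein's classical counterexample $f(x)=|x|^{-(d-1)}(\log|x|^{-1})^{-1}\chi_{|x|\le 1/2}$. The critical endpoint is at $\mathfrak V_4^d$, which lies on the sharp Strichartz/local smoothing line; there I would use the frequency-localized local smoothing estimate combined with a square function or wave-packet decomposition to obtain the restricted weak type bound with the $k$-dependence summing to $O(1)$. For $d\ge 3$ the vertex $\mathfrak V_3^d$ is handled by an $L^2$-based argument: Plancherel combined with the $|\xi|^{-(d-1)/2}$ symbol decay and an $L^2$ Sobolev embedding in $t$ gives the required restricted weak type bound. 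Stein's analytic interpolation, applied to a suitable analytic family of Bochner-Riesz-type multipliers, then fills in the open interior of $\Delta(d)\setminus\{\mathfrak V_2^d,\mathfrak V_3^d,\mathfrak V_4^d\}$.

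For necessity: the upper side $\frac1q\le\frac1p$ comes from testing against constants on large balls; the line through $\mathfrak V_1^d$ and $\mathfrak V_4^d$ is sharp by Knapp-type examples supported on tubes of width $\rho$ and length $\sqrt\rho$ tangent to the sphere; the line through $\mathfrak V_4^d$ and $\mathfrak V_3^d$ is sharp by testing with characteristic functions of thin annuli; the line through $\mathfrak V_2^d$ and $\mathfrak V_3^d$ is sharp by Stein's counterexample. The hardest part will be the sharp restricted weak type estimate at $\mathfrak V_4^d$, where one cannot afford any $\epsilon$-loss in the frequency summation; this requires the full strength of the local smoothing theorem for the half-wave propagator and is the most delicate ingredient of the argument.
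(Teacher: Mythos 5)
This theorem is not proved in the paper at all: it is quoted from Schlag--Sogge \cite{SS} and, for the borderline cases, from \cite{Lee}, so there is no in-paper argument to compare against. Your outline of the non-endpoint part (stationary-phase expansion of $\widehat{d\sigma}$, Littlewood--Paley decomposition, Sobolev embedding in $t$, local smoothing for the half-wave propagator, interpolation, and the standard necessity examples) is indeed the Mockenhaupt--Seeger--Sogge/Schlag--Sogge route and is fine for the interior of $\Delta(d)$.

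The genuine gap is at the vertices, which is precisely the content of the cited reference \cite{Lee}. At $\mathfrak V_4^d$ your plan to use ``the full strength of the local smoothing theorem'' does not close: sharp local smoothing was not available at the time (and even now only in $d=2$), and more importantly any local smoothing estimate with an $\epsilon$-loss, or even the sharp one, still leaves you with a frequency sum whose dyadic pieces are $O(2^{k\epsilon})$ or $O(1)$ at the endpoint, hence not summable; you cannot ``sum to $O(1)$'' this way. The actual proof of the restricted weak type bound at $\mathfrak V_4^d$ in \cite{Lee} bypasses local smoothing entirely and runs through Tao's sharp bilinear cone restriction estimate together with a Tao--Vargas type localization/induction that exploits the characteristic-function input. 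The same summation problem defeats your treatment of $\mathfrak V_3^d$: the $L^2$--$L^2$ and $L^1$--$L^\infty$ bounds for the frequency-localized pieces interpolate to exactly $O(1)$ per dyadic scale at $(\frac{d-1}{d},\frac1d)$ (note $\frac1p+\frac1q=1$ there), so Plancherel plus Sobolev in $t$ only yields the estimate with a logarithmic loss; an additional summation device (Bourgain's interpolation trick for restricted weak type, or the bilinear argument) is required. Finally, a small attribution slip: Stein's example $|x|^{1-d}(\log 1/|x|)^{-1}$ shows failure of the strong type bound at $\mathfrak V_2^d$ (which is all the ``conversely'' clause needs), but the failure of the \emph{restricted weak type} bound at $\mathfrak V_2^2$ in two dimensions is the much harder Seeger--Tao--Wright result, not a consequence of that example.
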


\newcommand{\wa}{\wt{\mathcal A}}
\begin{proof}[Proof of Proposition \ref{pro:mod}] As before, to avoid unnecessary technicality 
we consider a linearized operator. Let $ \kappa:\mathbb R^d\to [1,2]$ be a measurable function and  define an operator   
\[ \wt {\mathcal A}_\kappa ( f,g)(x) =\int_{\bS^{2d-1}}  f(x-\kappa(x)y)g(x-\kappa(x)z)  d\sigma_{2d-1}(y,z).\] 
It is sufficient to show that there is a constant $C$, independent of the measurable function $\kappa$, such that 
\begin{equation}\label{aaa} 
\|\wt {\mathcal A}_\kappa (f,g)\|_{L^r(\R^d)}\le C \|f\|_{L^p(\R^d)}\|g\|_{L^q(\R^d)}
\end{equation}
for $p,q,r$ as in Proposition \ref{pro:mod}. 

Since $\kappa(x)\in [1,2]$, using the same argument (the equality \eqref{decom}) as before, we easily see that
\begin{equation}\label{eq:d1}
\wa_\kappa(f,g)(x)\lesssim |f|\ast \chi_{B}(x) \, \sS g(x)
\end{equation}
and 
\begin{equation}\label{eq:d2}
\wa_\kappa(f,g)(x)\lesssim |f|\ast \chi_{B}(x) \Big( \sum_{l=0}^\infty 2^{-l(\frac{d-2}2)} \wt\sS (g(2^{-l/2}\cdot))(2^{l/2}x) \Big),
\end{equation}
where $B=B^d(0,2)$, the $d$-dimensional ball of radius $2$. Indeed, the estimate \eqref{eq:d1} is a direct consequence of the equality \eqref{decom}. To show \eqref{eq:d2}, we dyadically decompose the ball $B^d(0,1)$  
away from its boundary. 
For $l\ge 1$, let us set  $\mathbb A_l=\{y\in\R^d \,: 1-2^{-l-1}\le |y|\le 1-2^{-l-2} \},$ and
\begin{align*}
\CI_l &=\int_{\mathbb A_l} |f(x-\kappa(x)y)| (1-|y|^2)^{\frac{d-2}2} \int_{\mathbb S^{d-1}} |g(x- \kappa(x)\sqrt{1-|y|^2}\,z)|\, d\sigma_{d-1}(z) \,dy,\\
\CI_0 &=\int_{B^d(0,\frac34)} |f(x-\kappa(x)y)| (1-|y|^2)^{\frac{d-2}2} \int_{\mathbb S^{d-1}} |g(x- \kappa(x)\sqrt{1-|y|^2}\,z)|\, d\sigma_{d-1}(z) \,dy.
\end{align*}
Then by \eqref{decom} it follows that 
\[ \wa_\kappa(f,g)(x) \le \sum_{l=0}^\infty \CI_l.
\]
Note that, for $y\in\mathbb A_l$,  $1-|y|^2 =(1-|y|)(1+|y|) \sim 2^{-l}$ and $\kappa(x)\mathbb A_l =\{\kappa(x)y : y\in \mathbb A_l\}$ is included in $B^d(0,2)$ because $1\le \kappa(x)\le 2$. Hence, by scaling  we have
\begin{align*}
 \CI_l &\lesssim 2^{-l(\frac{d-2}2)} |f|\ast \chi_B(x) \wt\sS (g(2^{-l/2}\cdot))(2^{l/2}x),\, l\ge 1.
\end{align*}
Since $1-|y|\sim 1$  for $y\in B^d(0,3/4)$,  we  have $\CI_0 \lesssim |f|\ast \chi_B(x) \wt\sS g(x)$. Thus we get  \eqref{eq:d2}.

%%%%%%%%%%%%%%%%%%%%%%%%%%%%%%%%%%%%%%%%%%%%%%%%%%%%%%%%%%%%%%%%%%%%%%%%%%

We first use \eqref{eq:d1} to obtain  the estimate  \eqref{aaa}. Let $d\ge2$ and $r>\frac{d}{2d-1}$. By Young's convolution inequality, it is clear that $\||f|\ast \chi_{B^d(0,2)}\|_{L^{p_2}(\R^d)}\lesssim \|f\|_{L^{p_1}(\R^d)}$ for any $1\le p_1\le p_2\le \infty$. Combining this with the known bounds for $\sS$, we have 
\begin{equation}\label{tem_lo}
\|\wa_\kappa(f,g)\|_{L^r(\R^d)}\le \| |f|\ast \chi_B\|_{L^u(\R^d)}\|\sS g\|_{L^q(\R^d)}\lesssim \| f\|_{L^p(\R^d)}\|  g\|_{L^q(\R^d)}
\end{equation}
for  $\frac1r=\frac1u+\frac1q$, $\frac1q< \min\{\frac{d-1}d,\frac1r\}$, and $\frac1u\le \frac1p\le 1.$ Moreover, if $r>\frac{d}{d-1}$, then the estimate \eqref{tem_lo} also holds for $q=r$ and $0\le \frac1p\le 1$ because of the previous estimate for $\sS$. By symmetry we may interchange the roles of $f$ and $g$. So,  we have \eqref{aaa} for $\frac1r-\frac1p\le \frac1q \le 1$ and   $\frac1p \le \frac1r$ if $r>\frac d{d-1}$, and for $\frac1r-\frac1p\le \frac1q \le 1$ and $\frac1p< \frac{d-1}d$ if $r\le \frac d{d-1}$. Thus, from (complex) interpolation
we obtain the estimate \eqref{aaa} whenever
$(\frac1p,\frac1q,\frac1r)$ is in $\mathfrak T(r)\times \{\frac1r\}$, 
 where
$\mathfrak T(r)$ is given by
\[
\mathfrak T(r):=
\begin{cases}
\{(\frac1p,\frac1q)\in[0,1]^2:  \frac1r \le \frac1p+\frac1q \le  1+\frac1r \}, &\text{ if }  r>\frac{d}{d-1}, 
\\[5pt]
\{(\frac1p,\frac1q)\in[0,1]^2:  \frac1r \le \frac1p+\frac1q< \frac{2d-1}{d} \} ,   & \text{ if }  r\le \frac{d}{d-1}.
\end{cases}
\]
 Therefore we obtain the desired estimates for $d=2$.  

We  turn to the case $d\ge3$. By using the inequality \eqref{eq:d2}, we can further  extend the range of $p,q$ for \eqref{aaa},   when $r<\frac {d-1}d$. We let  $r<\frac{d-1}d$. If $(\frac1q,\frac1r)\in \Delta(d)\setminus\{\mathfrak V^d_2,\mathfrak V^d_3,\mathfrak V^d_4\}$,  Theorem \ref{local_sm} and scaling imply
\[ \|\wt\sS (g(2^{-l/2}\cdot))(2^{l/2}x)\|_{L^r(\R^d)} \lesssim2^{\frac{ld}2(\frac1q-\frac1r)}\|g\|_{L^q(\R^d)}.\]
Using this and  \eqref{eq:d2},  we see for any $1\le p\le \infty$, 
\begin{equation*}\label{border}
\begin{aligned}
\|\wa_\kappa(f,g)\|_{L^r(\R^d)}&\lesssim \| |f|\ast \chi_{B}\|_{L^\infty(\R^d)}  \Big\| \sum_{l=0}^\infty 2^{-l(\frac{d-2}2)} \wt\sS (g(2^{-l/2}\cdot))(2^{l/2}x) \Big\|_{L^r(\R^d)}\\
&\lesssim  \Big(\sum_{l=0}^\infty 2^{-\frac l2 (d-2 -d(\frac1q-\frac1r))}\Big) \| f\|_{L^p(\R^d)}\| g\|_{L^q(\R^d)}.
\end{aligned}
\end{equation*}
Thus, we have \eqref{aaa} whenever $1\le p\le \infty$, $\frac 1q -\frac1r <\frac{d-2}d$,  and $(\frac1q,\frac1r)\in \Delta(d)\setminus\{ \mathfrak V^d_2, \mathfrak V^d_3,\mathfrak V^d_4\}$.

For $(\frac1q,\frac1r)$ in $\Delta(d)\setminus\{\mathfrak V^d_2,\mathfrak V^d_3,\mathfrak V^d_4\}$  we separately consider the following  three cases: 
\[\mathbf A: \frac1d<\frac1r<\frac{d-1}d,  \    \  \mathbf B:\frac1r<\frac{d-2}{d(d-1)}, \  \   \mathbf C:\frac{d-2}{d(d-1)}\le \frac1r\le \frac 1d.\]
 In the case $\mathbf A$,  $(\frac1q,\frac1r)\in \Delta(d)\setminus\{\mathfrak V^d_2,\mathfrak V^d_3,\mathfrak V^d_4\}$ if and only if $\frac1r \le \frac1q<\frac {d-1}d$, so $\frac1q-\frac1r<\frac{d-2}d$ holds. Thus we have \eqref{aaa} for  $1\le p\le \infty$ and  $\frac1r \le \frac1q<\frac {d-1}d$. Notice that the estimate \eqref{aaa} is also true for $\frac1r-\frac1q\le \frac1p\le1$ and $\frac1q\le \frac1r$ (see \eqref{tem_lo}).
 From this and symmetry, we see that the estimate \eqref{aaa} holds for $\max\{\frac1r-\frac1q,0\}\le \frac1p\le1$ and $\frac1q<\frac{d-1}d$, or $\max\{\frac1r-\frac1p,0\}\le \frac1q\le1$ and $\frac1p<\frac{d-1}d$. Then interpolation between these estimates gives the estimate \eqref{aaa} for  $\frac1d<\frac1r<\frac{d-1}d$
and  $(\frac1p,\frac1q)\in \mathfrak D(r)$ which is given by 
\[\mathfrak D(r):=\Big\{(\frac1p,\frac1q)\in [0,1]^2 :  \frac1r\le\frac1p+\frac1q<\frac{2d-1}d\Big\}, \ \   \frac{d}{d-1}<r<d.\]
 
 In the case $\mathbf B$, $(\frac1q,\frac1r)\in \Delta(d)\setminus\{\mathfrak V^d_2,\mathfrak V^d_3,\mathfrak V^d_4\}$ if and only if $\frac1r\le \frac1q \le \frac dr$, so $\frac1q-\frac1r<\frac{d-2}d$. Thus we have \eqref{aaa} for  $1\le p\le \infty$ and  $\frac1r\le \frac1q \le \frac dr$. In the case  $\mathbf C$, $(\frac1q,\frac1r)$ is in  $ \Delta(d)\setminus\{\mathfrak V^d_2,\mathfrak V^d_3,\mathfrak V^d_4\}$ whenever $\frac1q-\frac1r<\frac{d-2}d$. (Note that  there is $q_\circ$ such that $(\frac1{q_\circ},\frac1r)\in \Delta(d)$ but $\frac1{q_\circ}-\frac1r\ge \frac{d-2}d$). Hence, we have  \eqref{aaa} for $1\le p\le \infty$
 and $\frac 1r\le \frac1q<\frac1r+\frac{d-2}d$. 
We now define the set  $\mathfrak D(r)$  for the cases $\mathbf B$ and $\mathbf C$ by 
  \[
 \mathfrak D(r):=
\begin{cases}
\{(\frac1p,\frac1q)\in[0,1]^2 :  \frac1r\le \frac1p+\frac1q<\frac dr+1\}, &\text{ if }  \frac1r<\frac{d-2}{d(d-1)}, 
\\[5pt]
\{(\frac1p,\frac1q)\in[0,1]^2 :  \frac1r\le\frac1p+\frac1q<\frac 1r+\frac{2d-2}d\} ,   & \text{ if }  \frac{d-2}{d(d-1)}\le \frac1r\le \frac1d.
\end{cases}
\]
 Then by symmetry and applying interpolation again, we have  \eqref{aaa} for $(\frac1p,\frac1q)\in \mathfrak D(r)$, $0\le \frac1r\le \frac1d$.  As a result, we see that the estimate \eqref{aaa} holds for $p,q,r$ whenever $(\frac1p,\frac1q)$ is in $\mathfrak D(r) \cup \mathfrak T(r)$ and $r>\frac d{2d-1}$, which completes the proof.
 \end{proof}

\begin{proof}[Proof of Proposition \ref{pro:low1}] 
It is easy to see that the estimate \eqref{loc_pqr} is impossible when $\frac1p+\frac1q <\frac1r$, since the bilinear spherical mean is commutative with simultaneous translation \cite{BeGe}.  So it is enough to show that the exponents $p,q,r$ should satisfy 
$\frac1p+\frac1q\le \min\{ \frac{2d-1}d,\, 1+\frac dr\}$ 
whenever we assume that the estimate \eqref{loc_pqr} holds.

Now we assume \eqref{loc_pqr}. We first show that $\frac1p+\frac1q\le \frac{2d-1}d.$ Let us fix $0<\epsilon_\circ\ll 1$ sufficiently small. For $0<\delta\le \epsilon_\circ$, we set 
\[ f_\delta=\chi_{B^d(0,\delta)}\quad \mbox{and}\quad g_\delta =\chi_{B^d(0,C_1\delta)},\]
where $C_1$ is a constant  chosen later. Let $\mathbb A:=\{x\in\R^d : \frac1{\sqrt 2}\le |x|\le \frac1{\sqrt 2} +\epsilon_\circ\}.$ Then we claim  that, for any $x\in\mathbb A$ and $0<\delta\le\epsilon_\circ$,
\begin{equation}\label{lower1}
 \wt \CM(f_\delta,g_\delta)(x)\ge C \delta^{2d-1}
 \end{equation}
with $C>0$  independent of $\delta.$  So, the estimate \eqref{loc_pqr} implies
\[ |\mathbb A|^{\frac1r}\delta^{2d-1}\lesssim  \delta^{\frac dp +\frac dq},\quad 0<\delta \ll 1, \]
which yields  $\frac1p+\frac1q\le \frac{2d-1}d$ by letting  $\delta\to 0$. 

We now show \eqref{lower1}. To do so, for $x\in \mathbb A$ we set
\begin{align*}
E^1_x &=\{y\in\R^d :  |x / |x| - \sqrt 2y |\le \sqrt 2\delta/(1+\sqrt{2}\epsilon_\circ) \}\quad \mbox{and}\\
E^2_x &=\{z\in\R^d :  |z|=1,\,\, | x /{ |x|} - z |\le \sqrt 2 C_2\delta/(1+\sqrt{2}\epsilon_\circ) \},
\end{align*}
where $C_2$ is a constant which is chosen later. Then for $x\in\mathbb A$ and $y\in E^1_x$ it is easy to check that $f_\delta(x-\sqrt{2}|x| y )=1$ and $|\sqrt{1-|y|^2}-1/\sqrt{2}|\le C_3 \delta$ for some $C_3>0$ depending only on $\epsilon_\circ.$ We now put $C_2=C_3.$ Then for  $z\in E^2_x$ $|x-\sqrt{2}|x|\sqrt{1-|y|^2}z|\le 3C_2\delta $, hence $g_\delta(x-\sqrt{2}|x|\sqrt{1-|y|^2}z)=1$ when we choose $C_1$ so that $C_1>3C_2$. From this and \eqref{decom}, we see that for $x\in \mathbb A$
\begin{align*}
\wt \CM(f_\delta,g_\delta)(x) 
&\ge \Big|\int_{\mathbb S^{2d-1}} f_\delta(x-\sqrt{2}|x|y)g_\delta(x-\sqrt{2}|x|z) d\sigma_{2d-1}(y,z)\Big|\\
&\ge \int_{E^1_x} f_\delta(x-\sqrt{2}|x|y)\int_{\mathbb S^{d-1}\cap E^2_x} 
g_\delta(x-\sqrt{2}|x|z) d\sigma(z)(1-|y|^2)^{\frac{d-2}2} dy\\
&\ge (1/\sqrt{2} -C_2\delta)^{d-2} |E^1_x| \sigma(E^2_x) \sim \delta^{d}\delta^{d-1},\quad 0<\delta\ll 1.
\end{align*}
Here the implicit constant only depends on $\epsilon_\circ$, hence we obtain \eqref{lower1}.

We next show that $\frac1p+\frac1q\le 1+\frac dr.$  We fix small $\epsilon>0$ and set for  $0<\delta\le \epsilon$  
\[f_\delta=\chi_{B(0,\frac1{\sqrt{2}}+2\delta)\setminus B(0,\frac1{\sqrt{2}}-2\delta)}\quad  \mbox{and} \quad g_\delta=\chi_{B(0,\frac1{\sqrt{2}}+C_1\delta)\setminus B(0,\frac1{\sqrt{2}}-C_1\delta)}\] 
for some $1<C_1 < 2^{-3/2}\epsilon^{-1}$ which is to be chosen later. Then, if $|x|\le \delta$ and $\frac1{\sqrt{2}}-\delta\le |y|\le \frac1{\sqrt 2}$ we have  $f_\delta(x-y)=1$ and $\frac1{\sqrt 2} \le \sqrt{1-|y|^2} \le  \frac1{\sqrt{2}}+C_2\delta$ for some $C_2$ depending only on $\epsilon$. Hence, $g_\delta(x-\sqrt{1-|y|^2}z)=1$ for $|z|=1$, $|x|\le \delta$, and $\frac1{\sqrt{2}}-\delta\le |y|\le \frac1{\sqrt 2}$,  if we choose $C_1$ so that $C_1>C_2+1$. Thus, by the equality \eqref{decom} we have for $|x|\le \delta$
\begin{align*}
\wt{\CM}(f_\delta,g_\delta)(x)
&\ge \int_{\mathbb S^{2d-1}} f_\delta(x-y)g_\delta(x-z) d\sigma_{2d-1}(y,z)
 \\
&\gtrsim \int_{\{y\,: \,2^{-1/2}-\delta\le |y|\le 2^{-1/2}\}} f_\delta(x-y)\int_{\mathbb S^{d-1}} 
g_\delta(x-\sqrt{1-|y|^2}z) d\sigma(z) dy\\
&\gtrsim \delta.
\end{align*}
The $L^p\times L^q \to L^r$ boundedness of $\wt \CM$ implies $\delta^{1+\frac dr} \lesssim \delta^{\frac1p+\frac1q},$ $ 0<\delta\ll 1$, hence we get the desired $\frac1p+\frac1q\le 1+\frac dr.$ 
\end{proof}

%%%%%%%%%%%%%%%%%%%%%%%%%%%%%%%%%%%%%%%%%%%%%%%%%%%%%%%%%%%%%%%%%%%%%%%%%%%%%%%%%%%%%%%%%%%
\section{Proof of Theorem \ref{thm:main}}\label{sec:br}
In this section, we reduce  the maximal estimate for the bilinear Bochner-Riesz operator  to that for a maximal operator generated by  bilinear multiplier operators of which multipliers supported in a thin annulus. To do this, we break $\CB^\alpha_\lambda$ into a sum of auxiliary operators $\FB^\delta_\lambda$, $0<\delta\le 1/4$, by decomposing the multiplier of $\CB^\alpha_\lambda$ dyadically away from its  singularity $\{(\xi,\eta) : |\xi|^2+|\eta|^2=1/\lambda^2\}$.

More precisely,  let us choose  $\psi\in C^\infty_0([\frac12,2])$ and $\psi_0\in C^\infty_0([-\frac34,\frac34]) $ such that
$(1-t)^\alpha_+ =\sum_{j=2}^\infty 2^{-j\alpha} \psi (2^j(1-t)) +\psi_0(t),$ $0\le t<1$. Using this, we have
\begin{equation}\label{reduction} \CB^\alpha_*(f_1,f_2)(x)\le  \sum_{\delta \in \mathcal{D}} \delta^{\alpha} \sup_{\lambda>0}|\FB^\delta_\lambda(f_1,f_2)(x)| +\sup_{\lambda>0}|T_{m_\lambda}(f_1,f_2)(x)|,
\end{equation}
where $\mathcal{D}$ is the set of positive dyadic numbers $\le 1/4$,
\[\FB^\delta_\lambda(f,g)(x)=\iint_{\R^d\times\R^d} e^{2\pi i x\cdot(\xi+\eta)}\psi\Big(\frac{1-|\lambda\xi|^2-|\lambda\eta|^2}\delta\Big)\wh{f}(\xi)\wh g(\eta)d\xi d\eta, 
\] 
and $T_{m_\lambda}$ is a bilinear multiplier operator with multiplier $m_\lambda =m(\lambda\cdot)$ and $m(\xi,\eta) =\psi_0(|\xi|^2+|\eta|^2)$.  Since $m$ is smooth and supported in a compact set, it is easy to see that $\sup_{\lambda>0}|T_{m_\lambda}(f_1,f_2)(x)|$ is  dominated by the product of the Hardy-Littlewood maximal functions of  $f$ and $g$. Hence  we have, for $1<p,q \le \infty$ and $r$ satisfying $\frac1r=\frac1p+\frac1q$,
\[ \Big\|\sup_{\lambda>0}|T_{m_\lambda}(f_1,f_2)|\Big\|_{L^{r}(\R^d)} \lesssim \|f_1\|_{L^p(\R^d)}\|f_2\|_{L^q(\R^d)}.\]
Thus, the major task is to get bound on the maximal function $ \sup_{\lambda>0}|\FB^\delta_\lambda(f_1,f_2)|$ in terms of $\delta$. From now on we focus on obtaining estimates for the maximal operator 
\[\FB^\delta_* (f_1,f_2):=\sup_{\lambda>0}|\FB^\delta_\lambda(f_1,f_2)|, \  \   0<\delta\le 1/4.\]

To deal with $\FB^\delta_*$, we adopt the standard arguments relating the maximal operator to the square function (\cite{CaS,Stein2}). Especially,  by the fundamental theorem of calculus, 
 $|F(t)| \le |F(s)| +\int_1^2|F'(\tau)|d\tau$, $1\le s,t\le 2$. Hence we obtain
\begin{equation*}\label{re1}
\begin{aligned}
&\FB^\delta_*(f_1,f_2)(x)
= \sup_{k\in \Z} \sup_{1\le \lambda\le 2 } \big|\FB^\delta_{2^k\lambda}(f_1,f_2)(x)\big|\\
&\quad \le \sup_{k\in \Z} \Big(\int_1^2 \big |\FB^\delta_{2^k t}(f_1,f_2)(x) \big | dt +\int_1^2 \big |\frac{\partial}{\partial t}\FB^\delta_{2^k t}(f_1,f_2)(x) \big | dt\Big).
 \end{aligned}
 \end{equation*}
Notice that $\delta \frac{\partial}{\partial t}\FB^\delta_{2^k t}$ satisfies the same quantitative properties as $\FB^\delta_{2^k t}$ when $1\le t\le 2$, since $\frac{\partial}{\partial t}\FB^\delta_{2^k t}$ is also a bilinear multiplier operator with a multiplier $m_t(2^kt\xi,2^kt\eta)$ where $ m_t(\xi,\eta) =\frac{-2}{t \delta}(|\xi|^2+|\eta|^2) \psi'(\frac1\delta(1-|\xi|^2-|\eta|^2)).$ Hence in order to estimate $\FB^\delta_*$ it suffices to consider  
the operator
\begin{equation*}\label{eq:max}  
(f_1,f_2)\, \to \,\sup_{k\in \Z}\int_1^2 \big|\FB^\delta_{2^k\lambda } (f_1,f_2)(x)\big|d\lambda.
\end{equation*}
In fact, Theorem \ref{thm:main} is an immediate consequence of Proposition \ref{prop:Lp} below.
\begin{prop}\label{prop:Lp}
Let $d\ge 2$ and $2\le p,q<\infty$ and $1\le r\le \infty$ satisfy $\frac1p+\frac1q=\frac1r.$ Set $\mathfrak p>\frac{2d}{d-1}$. Suppose that for $s>\mathfrak p$
the estimate \eqref{del-square} holds uniformly in $\phi\in \CC^{N_\circ}([-1,1])$ for some $N_\circ\in\mathbb N.$ Then we have, for $0<\delta\le 1/4$ and $\epsilon>0$,  
\begin{equation}\label{eq:Lp}
\Big\|  \sup_{k\in\Z}\int_1^2\big|\FB^\delta_{2^k\lambda}(f_1,f_2)(x)\big|d\lambda \Big\|_{L^{r}(\R^d)}\le C_\epsilon \delta^{-\alpha_{\fp}(p,q)+1-\epsilon} \|f_1\|_{L^p(\R^d)}\|f_2\|_{L^q(\R^d)}.
\end{equation}
Here $\CC^{N_\circ}([-1,1])$ is a class of smooth functions defined in Section \ref{sec:w} and $\alpha_{\fp}$ is given by \eqref{alpha}.
\end{prop}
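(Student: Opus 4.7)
The plan is to adapt the Jeong--Lee--Vargas decomposition from \cite{JLV} so as to separate the roles of $\xi$ and $\eta$ in $\FB^\delta_\lambda$, and to pass from the bilinear maximal operator to products of the classical one-variable square functions $\mathfrak{S}_\delta^\varphi$ by a Sobolev-type reduction applied twice, so that the hypothesis \eqref{del-square} can be brought to bear. First, I would decompose the multiplier $\psi\bigl((1-|\lambda\xi|^2-|\lambda\eta|^2)/\delta\bigr)$ via a smooth partition of unity of $|\lambda\xi|^2\in[0,1]$ into dyadic pieces together with a Fourier/Taylor representation of $\psi$, so that $\FB^\delta_{2^k\lambda}(f,g)$ becomes a rapidly convergent superposition of products $S^\varphi_{j,\delta,2^k}f\cdot S^\varphi_{j',\delta,2^k}g$, where each $S^\varphi_{j,\delta,2^k}$ is a Fourier multiplier whose symbol is supported in an annulus of thickness $\sim 2^k\delta^{1/2}$ (or, for the origin piece $j=0$, in a ball of radius $\sim 2^k\delta^{1/2}$ centered at the origin). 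This is the bilinear counterpart of the usual Bochner--Riesz decomposition, and once the variables are separated one reduces to essentially linear objects.

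Second, I would dispose of the supremum in $k$ and the integral in $\lambda\in[1,2]$ using the Sobolev-type inequality coming from the fundamental theorem of calculus,
\begin{equation*}
\sup_{1\le \lambda\le 2}|F(\lambda)|^2 \le |F(1)|^2 + 2\int_1^2|F(\lambda)||F'(\lambda)|\,d\lambda,
\end{equation*}
together with its analogue after passing from $k\in\Z$ to a continuous dilation parameter. Each $\lambda$- or $k$-differentiation of the multiplier produces a factor $\delta^{-1}$, which Cauchy--Schwarz splits into $\delta^{-1/2}$ per application; since the device must be applied twice in the bilinear setting, the cumulative loss is the extra $\delta^{-1}$ responsible for the ``$+1$'' appearing in $\alpha(p)+\alpha(q)+1$ in the first case of \eqref{alpha}. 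The outcome is a pointwise domination of the maximal operator by a product of the square functions $\FD^\varphi_{\delta,k}$ from Section 5, one factor for each of $f_1,f_2$. After H\"older in $L^r$ with $\frac1r=\frac1p+\frac1q$, the problem reduces to bounding $\bigl\|\,\|\FD^\varphi_{\delta,k}f\|_{\ell^\infty_k}\,\bigr\|_{L^s}$ for $s\in\{p,q\}$. For $s\ge \fp$, a rescaling in $k$ relates $\FD^\varphi_{\delta,k}$ to $\mathfrak{S}_\delta^\varphi$ at a rescaled $\delta$, and \eqref{del-square} yields $\delta^{-(d-2)/2+d/s-\epsilon}$; for $s\le \fp$, the classical $L^s$ bound for the Bochner--Riesz operator gives $\delta^{-\alpha(s)}$. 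Pairing these two bounds according to whether $(1/p,1/q)$ lies in $\CDD_1(1/\fp)$, $\CDD_2(1/\fp)$, or $\CDD_3(1/\fp)$ reproduces exactly the three-case formula for $\alpha^*_{\fp}(p,q)$ in \eqref{alpha}.

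The main obstacle will be performing the $(\xi,\eta)$-separation cleanly so that all symbols produced by the decomposition, together with their $\lambda$- and $k$-derivatives, lie in $\CC^{N_\circ}([-1,1])$ uniformly in the decomposition parameters; only then does \eqref{del-square} apply with a uniform constant. A second delicate point is the origin piece $j=0$: the symbols of the operators $S^\varphi_{0,\delta,2^k}$, $k\in\Z$, overlap badly across $k$, so the standard $\ell^2_k$ Littlewood--Paley decoupling is unavailable, and one must instead work with the sublinear $\ell^\infty_k$-maximal function of $\FD^\varphi_{\delta,k}$. This is the technical heart of the argument, as flagged in the introduction, and is where the deviation from the linear Bochner--Riesz maximal theory is most apparent.
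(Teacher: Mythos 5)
Your plan follows the paper's proof in all essentials: the variable-separating decomposition of $\FB^\delta_\lambda$ into products $S^{\varphi_a}_{\rho,\tilde{\delta},\lambda}f_1\,S^{\varphi_b}_{\varrho-\rho,\tilde{\delta},\lambda}f_2$ plus a rapidly decaying error term (Lemma \ref{lem:decom}), Cauchy--Schwarz in $\rho$ and $\lambda$ followed by H\"older to reduce to the mixed square functions $\FD^\varphi_{\tilde\delta,k}$, and the $\ell^\infty_k$ treatment of the mutually non-orthogonal near-origin pieces, which you correctly identify as the technical heart. Two points, however, need repair.

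First, the partition of $|\lambda\xi|^2$ must be a \emph{uniform} partition at scale $\tilde{\delta}=\delta^{1+\epsilon}$, not a dyadic one: the Taylor expansion of $e^{2\pi i\tau(\varrho-|\lambda\xi|^2-|\lambda\eta|^2)/\delta}$ converges, with the factor $\delta^{\epsilon(a+b)}$ needed to sum the series and absorb the remainder, only because $|\rho-|\lambda\xi|^2|/\delta\lesssim\delta^{\epsilon}$ on each piece; dyadic pieces away from the critical scale have width $\gg\delta$ and do not localize $|\lambda\eta|^2$ to a $\delta$-neighborhood, so the separation of variables fails there. (Relatedly, the generic pieces are annuli of thickness $\sim\delta$ in the $|\xi|^2$-variable; only the origin piece is a ball of radius $\sim\delta^{1/2}$.) Second, and more substantively, your claimed bound $\delta^{-\alpha(s)}$ for $\big\|\,\|\FD^\varphi_{\delta,k}f\|_{\ell^\infty_k}\big\|_{L^s}$ in the range $2\le s\le\fp$ is not available: the classical $L^s$ Bochner--Riesz bound does not control the maximal mixed square function, and if it did, it would yield an exponent strictly smaller than $\alpha^*_{\fp}(p,q)$ on $\CDD_2\cup\CDD_3$ (since $\alpha(s)$ is convex in $1/s$ and hence lies strictly below the chord joining its values at $s=2$ and $s=\fp$), contradicting your own claim to reproduce \eqref{alpha} exactly. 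What is actually needed is the $L^2(\ell^\infty_k)$ bound $\lesssim\delta^{-\epsilon}$, proved directly from Plancherel's theorem and the Littlewood--Paley decomposition (Corollary \ref{lem:L2}), interpolated against the $L^s(\ell^\infty_k)$ bound for $s>\fp$ obtained from \eqref{del-square} via Proposition \ref{prop:square}; this gives $\delta^{-\alpha(1/\nu)\frac{1-2/s}{1-2\nu}-\epsilon}$ for $2\le s\le\fp$ with $\nu=1/\fp$, which is precisely what produces the second and third cases of \eqref{alpha}.
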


From Proposition \ref{prop:Lp} (and Theorem \ref{LRS_L}), we particularly see that,   for any $\epsilon>0$,
\begin{equation*}\label{L2}
\Big\|  \sup_{k\in\Z}\int_1^2\big|\FB^\delta_{2^k\lambda}(f_1,f_2)(x)\big|d\lambda \Big\|_{L^{1}(\R^d)}\le C_\epsilon \delta^{-\epsilon} \|f_1\|_{L^2(\R^d)}\|f_2\|_{L^2(\R^d)}.
\end{equation*}
Compared with the linear case, there is no gain of $\delta$-exponent in this step. More precisely,  its linear counterpart is the estimate
\begin{equation} \label{LL2}
\left\|\sup_{k\in\Z}\Big(\int_1^2|\psi\Big(\frac{1-|2^ktD|^2}\delta\Big)f|^2dt\Big)^{1/2}\right\|_{L^2(\R^d)}\lesssim \delta^{1/2} \|f\|_{L^2(\R^d)},
\end{equation}
which follows from Plancherel's theorem and the Littlewood-Paley inequality. The positive power of  $\delta$  in  \eqref{LL2} offsets the negative power of $\delta$ which occurs in the standard argument relating the maximal estimate to the  square function estimate. So, one can prove $L^2$-boundedness of   $\CR^\alpha_*$ for $\alpha>0.$ However, this is not the case with  the  bilinear maximal operator, hence we only obtain $L^2\times L^2\to L^1 $ boundedness of $\CB^\alpha_*$ for $\alpha>1$.

Before finishing this section, we make a remark on the  negative results  regarding $L^p\times L^q\to L^r$ boundedness of $\CB^\alpha_*.$
 A necessary condition for $L^p\times L^q \to L^r$ boundedness of $\CB^\alpha_*$ was obtained by D. He in \cite{He}.  He showed that $\CB^\alpha_*$ is unbounded from $L^p(\R^d)\times L^q(\R^d)$ to $L^r(\R^d)$ if $\alpha < \frac{2d-1}{2r}-\frac{2d-1}2$, by adopting the counterexample for the maximal Bochner-Riesz operator in \cite{tao}. In particular this shows that  $L^2\times L^2\to L^1$ boundedness  holds only if $\alpha\ge 0$.

%%%%%%%%%%%%%%%%%%%%%%%%%%%%%%%%%%%%%%%%%%%%%%%%%%%%%%%%%
%
%%%%%%%

\section{$L^p$-estimate for a mixed square function }\label{sec:w}
In this section we obtain $L^p(l^\infty)$-estimates for a square function $\FD^\varphi_{\delta, k}$ defined by \eqref{md:square},  which plays a key role in the proof of Propositions \ref{prop:Lp}. 

To define $\FD^\varphi_{\delta, k}$, let $I=[-1,1]$ and consider the class of smooth functions \[\CC^N(I):=\Big\{ \varphi: \supp\, \varphi\subset I, \|\varphi\|_{C^N(\R)}:=\max_{0\le n\le N }\|\frac{d^n}{dt^n}\varphi\|_{L^\infty(\R)}\le 1\Big\}.\]
For $\varphi\in\CC^N(I)$ and positive numbers $\rho,\delta,\lambda>0$,  we define a (linear) multiplier operator $S^\varphi_{\rho,\delta, \lambda}$ 
 by setting
\begin{equation*}\label{opS}
\CF^{-1}\big(S^\varphi_{\rho,\delta, \lambda} f \big)(\xi) =\varphi\Big(\frac{\rho-|\lambda\xi|^2}{\delta}\Big)\wh f(\xi),\quad f\in \CS(\R^d),
\end{equation*}
and  a mixed square function $\FD^\varphi_{\delta, k}$, $k\in\Z$,  by
\begin{equation}\label{md:square}
\FD^\varphi_{\delta,k}f(x) = \Big(\sum_{\rho\in\delta\Z\cap [0,2]} \int_1^2 |S^\varphi_{\rho,\delta, 2^k\lambda}f(x)|^2 d\lambda\Big)^{1/2}, \quad f\in \CS(\R^d).
\end{equation}
From now on, we write $S^\varphi_{\rho,\delta} =S^\varphi_{\rho,\delta,1}$ for simplicity. 

\begin{prop}\label{prop:square} Let $d\ge2$, $2\le p< \infty$, and $N\ge d$.  Suppose that, for  any $0<\delta \le 1/4$,
\begin{equation}\label{tem:sq}
\Big\| \Big(\int_{1/2}^1 |S^\varphi_{t,\delta}f(x)|^2 dt\Big)^{1/2}\Big\|_{L^p(\R^d)} \le C\delta^{-\beta}\|f\|_{L^p(\R^d)}
\end{equation}
holds with some $\beta\ge -\frac12$, and $C$ independent of $\varphi\in\CC^N(I)$. Then for any $\epsilon>0$ we have 
\begin{equation}\label{eq:square}
\big\| \sup_{k\in \Z} | \FD^\varphi_{\delta,k}f(x)|\big\|_{L^p(\R^d)}  \lesssim_\epsilon \delta^{-\beta-\frac12-\epsilon} \|f\|_{L^p(\R^d)}
\end{equation}
with $C$ independent of  $\varphi\in \CC^{N+1}(I)$ and $0<\delta\le1/4$.
\end{prop}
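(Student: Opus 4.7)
The plan is to prove \eqref{eq:square} by a two-step reduction to \eqref{tem:sq}: first establishing the single-scale bound $\|\FD^\varphi_{\delta,k}f\|_p\lesssim \delta^{-\beta-1/2}\|f\|_p$ (with the $\delta^{-1/2}$ loss arising from converting a discrete $\rho$-sum to an integral), and then absorbing $\sup_k$ via a Littlewood-Paley decomposition of $f$ at a further cost of $\delta^{-\epsilon}$.

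\textbf{Fixed-$k$ bound.} The dilation identity $S^\varphi_{\rho,\delta,2^k\lambda}f(x)=S^\varphi_{\rho,\delta,\lambda}(f(2^k\cdot))(2^{-k}x)$ gives $\|\FD^\varphi_{\delta,k}\|_{L^p\to L^p}=\|\FD^\varphi_{\delta,0}\|_{L^p\to L^p}$, reducing to the case $k=0$. Expanding $|\FD^\varphi_{\delta,0}f|^2$ on the Fourier side yields a bilinear form with kernel $\sum_{\rho\in\delta\Z}\varphi((\rho-a)/\delta)\varphi((\rho-b)/\delta)$; Poisson summation identifies this as $\delta^{-1}\int_\R\varphi((\rho-a)/\delta)\varphi((\rho-b)/\delta)\,d\rho$ plus a rapidly-decaying Fourier error. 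Undoing the Plancherel gives
\[
\sum_{\rho\in\delta\Z\cap[0,2]}|S^\varphi_{\rho,\delta,\lambda}f|^2\lesssim \delta^{-1}\int_0^2|S^{\tilde\varphi}_{t,\delta,\lambda}f|^2\,dt
\]
for a suitable $\tilde\varphi\in \CC^N(I)$. Absorbing $\lambda\in[1,2]$ into the multiplier via $t=\lambda^2 t'$ rewrites $S^{\tilde\varphi}_{\lambda^2 t',\delta,\lambda}$ as $S^{\tilde\varphi}_{t',\delta/\lambda^2,1}$ with $\delta/\lambda^2\in[\delta/4,\delta]$; applying \eqref{tem:sq} over finitely many subintervals covering the $t'$-range yields $\|\FD^\varphi_{\delta,0}f\|_p\lesssim \delta^{-\beta-1/2}\|f\|_p$.

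\textbf{Controlling $\sup_k$.} Fix a Littlewood-Paley partition $1=\sum_n\chi_n$ with $\chi_n$ supported in $\{|\xi|\sim 2^n\}$, and write $f=\sum_n P_nf$. Since the multiplier of $S^\varphi_{\rho,\delta,2^k\lambda}$ is supported on $|\xi|^2\sim \rho\cdot 2^{-2k}$, the quantity $S^\varphi_{\rho,\delta,2^k\lambda}P_nf$ is essentially nonzero only when $\rho\sim 2^{2(n+k)}$, yielding the near-orthogonality
\[
\FD^\varphi_{\delta,k}f(x)^2\lesssim \sum_{n\in\Z}\FD^\varphi_{\delta,k}(P_nf)(x)^2,
\]
and moreover showing that $\FD^\varphi_{\delta,k}(P_nf)$ is substantially nontrivial only for $k$ in a window $W_n\subset [-n-C\log\delta^{-1},-n+C]$ of length $O(\log\delta^{-1})$. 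The inequality $\sup_k\sum_n\le \sum_n\sup_k$ then gives
\[
\sup_k\FD^\varphi_{\delta,k}f(x)\lesssim \Bigl(\sum_n\sup_{k\in W_n}\FD^\varphi_{\delta,k}(P_nf)(x)^2\Bigr)^{1/2}.
\]
For each $n$, the bound $\sup_{k\in W_n}|a_k|\le \bigl(\sum_{k\in W_n}|a_k|^p\bigr)^{1/p}$ with the fixed-$k$ estimate applied to $P_nf$ gives $\|\sup_{k\in W_n}\FD^\varphi_{\delta,k}(P_nf)\|_p\lesssim_\epsilon\delta^{-\beta-1/2-\epsilon}\|P_nf\|_p$. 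Summing in $n$ via the vector-valued version of \eqref{tem:sq} (automatic for multiplier-type operators by Khintchine) together with the $L^p$-Littlewood-Paley inequality for $p\ge 2$ closes the argument.

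The main obstacle is the passage from the fixed-$k$ bound to $\sup_k$. Naive scalar summation loses a factor $(\sum_n\|P_nf\|_p^2)^{1/2}$, which exceeds $\|f\|_p$ for $p>2$; to close the argument, the vector-valued extension of \eqref{tem:sq} must be invoked at the Littlewood-Paley recombination step via Fefferman-Stein. A secondary delicate point is the handling of the low-frequency tail: for $\rho$ close to $0$, $\varphi((\rho-|2^k\lambda\xi|^2)/\delta)\approx \varphi(0)$ acts as a near-identity across infinitely many $k$'s, and this tail must be separated off and dominated pointwise by the Hardy-Littlewood maximal operator applied to each $P_nf$ before the orthogonality argument is applied.
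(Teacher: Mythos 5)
Your overall strategy (single-scale bound from \eqref{tem:sq}, orthogonality in $k$ via Littlewood--Paley, separate treatment of the low-frequency part $\rho\lesssim\delta$) is the right one, and the fixed-$k$ step and the treatment of $\CI\CI$-type terms are essentially sound. But there is a genuine gap at the recombination step after you interchange $\sup_k\sum_n\le\sum_n\sup_k$. At that point you must prove
\[
\Big\|\Big(\sum_n\big[\sup_{k\in W_n}\FD^\varphi_{\delta,k}(P_nf)\big]^2\Big)^{1/2}\Big\|_{L^p}\lesssim \delta^{-\beta-\frac12-\epsilon}\|f\|_{L^p},
\]
and, as you yourself observe, the scalar bound only yields the weaker $\ell^2_n(L^p)$ quantity $(\sum_n\|\cdot\|_p^2)^{1/2}$. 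The fixes you invoke do not close this: Khintchine/Marcinkiewicz--Zygmund produces $\ell^2$-valued extensions of a \emph{single linear} operator, whereas $f\mapsto\sup_{k\in W_n}\FD^\varphi_{\delta,k}f$ is sublinear (an $\ell^\infty_k(\ell^2_{\rho,\lambda})$-norm of a vector of multiplier operators) and moreover varies with $n$; and Fefferman--Stein is a theorem about the Hardy--Littlewood maximal function, not a general principle for upgrading sublinear $L^p$ bounds to $L^p(\ell^2)$ bounds. So the key vector-valued inequality is asserted, not proved. (Two smaller points: your window claim $W_n\subset[-n-C\log\delta^{-1},-n+C]$ is false before the low-$\rho$ part is removed, since for $\rho=O(\delta)$ the multiplier support is a ball about the origin meeting $\{|\xi|\sim 2^n\}$ for \emph{all} sufficiently negative $k$ --- so that separation must come first, not as an afterthought; and "finitely many subintervals covering the $t'$-range" is really $O(\log(1/\delta))$ rescaled copies of $[1/2,1]$, which is where the hypothesis $\beta\ge-\frac12$ is needed to keep the sum under control.)

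The paper avoids the vector-valued issue entirely by ordering the steps differently: it first splits $\rho\in[0,2]$ into $[0,4\delta]$ and dyadic blocks $\BI_j$, reduces each block to $\rho\in[1/2,1]$ by scaling (summing $O(\log(1/\delta))$ blocks costs the $\delta^{-\epsilon}$), and only then treats the supremum. For $\rho\in[1/2,1]$ and $\lambda\in[1,2]$ the multiplier of $S^\varphi_{\rho,\delta,2^k\lambda}$ lives on $|\xi|\sim 2^{-k}$, so $S^\varphi_{\rho,\delta,2^k\lambda}f=\sum_{-3\le m+k\le 2}S^\varphi_{\rho,\delta,2^k\lambda}P_mf$ with only $O(1)$ terms per $k$. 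Applying $\ell^{p/2}_k\subset\ell^\infty_k$ to the \emph{undecomposed} quantity and then the hypothesis to each $(k,m)$ pair gives a bound by $\delta^{(-\beta-\frac12)p}\sum_m\|P_mf\|_p^p$, which is $\lesssim\delta^{(-\beta-\frac12)p}\|(\sum_m|P_mf|^2)^{1/2}\|_p^p\lesssim\|f\|_p^p$ using $\ell^2\hookrightarrow\ell^p$ pointwise and Littlewood--Paley --- no $\ell^2_n$-valued maximal estimate is ever needed. I recommend restructuring your argument along these lines: keep the supremum as an $\ell^p_k$ sum of $p$-th powers and let the constraint $m\approx -k$ do the bookkeeping, rather than passing to $\sum_n\sup_k$.
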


Using  Plancherel's theorem,  it is easy to check that   the estimate \eqref{tem:sq} holds  with $p=2$, $\beta=-\frac12$, and  a uniform $C$ as long as $\varphi\in\CC^N(I)$ for any $N\in\mathbb N$.  This is essentially due to Stein \cite{stein0}. Hence Corollary \ref{lem:L2} below is a direct consequence of Proposition \ref{prop:square}.
  
\begin{corollary}\label{lem:L2} Let $d\ge2$, $0\le \delta\le 1/4$, and $N\ge d+1.$ Then, for any $\epsilon>0$,  there is a constant $C=C(\epsilon)$ such that 
\[ \big\|  \sup_{k\in \Z} |\FD^\varphi_{\delta,k} f |\big \|_{L^2(\R^d)}\lesssim \delta^{-\epsilon} \|f\|_{L^2(\R^d)}\]
holds with $C$ uniform as long as $\varphi\in \CC^N(I)$. 
\end{corollary}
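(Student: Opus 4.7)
The plan is to apply Proposition \ref{prop:square} with $p=2$, once the hypothesis \eqref{tem:sq} is verified at the endpoint $\beta=-\tfrac12$ by a direct Plancherel computation.

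First I would establish that, uniformly over $\varphi\in\CC^N(I)$,
\[
\Big\|\Big(\int_{1/2}^{1}|S^\varphi_{t,\delta}f|^2\,dt\Big)^{1/2}\Big\|_{L^2(\R^d)}\lesssim \delta^{1/2}\|f\|_{L^2(\R^d)}.
\]
By Fubini and Plancherel, the square of the left-hand side equals
\[
\int_{\R^d}|\wh f(\xi)|^2\int_{1/2}^{1}\Big|\varphi\Big(\tfrac{t-|\xi|^2}{\delta}\Big)\Big|^2\,dt\,d\xi.
\]
Changing variables $s=(t-|\xi|^2)/\delta$ in the inner integral and using that $\varphi$ is supported in $[-1,1]$ with $\|\varphi\|_\infty\le 1$ gives the bound $\delta\int_{-1}^{1}|\varphi(s)|^2\,ds\le 2\delta$ independently of $\xi$. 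This yields \eqref{tem:sq} with $p=2$ and $\beta=-\tfrac12$, with a constant independent of $\varphi$.

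Second, I would simply invoke Proposition \ref{prop:square} with these parameters. The proposition requires $\varphi\in\CC^{N+1}(I)$ where the input hypothesis assumes $\varphi\in\CC^N(I)$ with $N\ge d$; since the Plancherel step above costs no regularity, the smoothness threshold for the corollary is $N+1\ge d+1$, matching the hypothesis $N\ge d+1$ in the statement. The conclusion \eqref{eq:square} then reads
\[
\Big\|\sup_{k\in\Z}|\FD^\varphi_{\delta,k}f|\Big\|_{L^2(\R^d)}\lesssim_\epsilon \delta^{-\beta-\frac12-\epsilon}\|f\|_{L^2(\R^d)}=\delta^{-\epsilon}\|f\|_{L^2(\R^d)},
\]
which is exactly the claim. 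There is no genuine obstacle here: once Proposition \ref{prop:square} is in hand, the corollary reduces to the trivial $L^2$ square function bound, with the only bookkeeping being the regularity index $N$ and the uniformity of the implicit constant over $\varphi\in\CC^N(I)$.
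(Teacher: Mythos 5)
Your proposal is correct and follows exactly the paper's route: verify the hypothesis \eqref{tem:sq} at $p=2$, $\beta=-\tfrac12$ by Plancherel (uniformly in $\varphi\in\CC^N(I)$) and then invoke Proposition \ref{prop:square}, whose conclusion gives $\delta^{-\beta-\frac12-\epsilon}=\delta^{-\epsilon}$. The regularity bookkeeping ($N\ge d+1$) is also handled as in the paper.
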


Before proving Proposition \ref{prop:square}, we recall the square function $\mathfrak S^\varphi_\delta$ associated with the Bochner-Riesz operator, which is given by \eqref{lo-square}. As mentioned in the introduction, the sharp $L^p$-estimate for $\mathfrak S^\varphi_\delta$ (the estimate \eqref{del-square}) has been studied by various authors. Among them  currently the best known results were obtained by Lee-Rogers-Seeger \cite{LRS} and Lee \cite{lee2}. We summarize them in Theorem \ref{LRS_L} below. 

\begin{theorem} \cite{LRS,lee2}\label{LRS_L} 
Let $d\ge2$. Then the estimate \eqref{del-square} holds if $p>p_s(d)=\min\{ p_0(d),\frac{2(d+2)}d\}$, where 
 $p_0(d) =2+\frac{12}{4d-6-k},~d\equiv k ~(\mbox{mod } 3),~k=0,1,2.$
\end{theorem}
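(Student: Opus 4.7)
The plan is to establish the square-function bound \eqref{del-square} by reducing it to a bilinear (or $k$-linear) adjoint restriction estimate on the sphere $\mathbb{S}^{d-1}$ and invoking the sharp bilinear/multilinear extension theorems together with a polynomial-partitioning induction and an $\varepsilon$-removal argument. Throughout, the convention is that the exponent in \eqref{del-square} should read $-\frac{d-2}{2}+\frac{d}{s}-\epsilon$; at the endpoint $s=\frac{2d}{d-1}$ this is $\frac12-\epsilon$, which is the correct quantitative form that Plancherel at $s=2$ interpolates into.

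First, by frequency localization (a Littlewood--Paley decomposition together with parabolic rescaling), I would reduce to $\widehat f$ supported in $\{|\xi|\sim 1\}$. The multiplier $\varphi\bigl((t-|D|^2)/\delta\bigr)$ then projects onto a $\delta$-thickened sphere of radius $\sqrt t$. Using $\|\mathfrak{S}_\delta^\varphi f\|_s^2=\|(\mathfrak{S}_\delta^\varphi f)^2\|_{s/2}$ and polarization, the desired estimate becomes an $L^{s/2}$ bilinear form estimate $\int_{1/2}^2 S^\varphi_{t,\delta}f\cdot\overline{S^\varphi_{t,\delta}g}\,dt$. A Whitney-type decomposition of $[1/2,2]^2$ into strips $|t-s|\sim 2^{-j}$ reduces matters, after rescaling, to a bilinear extension inequality for two pieces of the sphere whose normals are separated by $\sim 2^{-j/2}$; the diagonal piece $2^{-j}\lesssim\delta$ is handled by $L^2$-orthogonality, and summation over $j$ costs only logarithmic factors absorbed into $\delta^{-\epsilon}$.

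Second, I would apply the appropriate extension theorem to the bilinearized problem. For the range $s>\frac{2(d+2)}{d}$, Tao's bilinear adjoint restriction theorem for the sphere delivers the estimate directly, yielding the $\frac{2(d+2)}{d}$ threshold in the definition of $p_s(d)$. For the finer range $s>p_0(d)$, I would run a broad/narrow dichotomy based on Guth's polynomial partitioning, using Bennett--Carbery--Tao multilinear restriction on the broad pieces with $k$ transverse caps and treating the narrow pieces by induction on the scale $\delta^{-1}$. The arithmetic of choosing $k=k(d)$ optimally modulo $3$ (reflecting the parity issues in $k$-linear restriction estimates) produces exactly $p_0(d)=2+\frac{12}{4d-6-k}$. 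Tao's $\varepsilon$-removal lemma then upgrades the resulting restricted weak-type inequality at the critical exponent to the strong-type estimate with $\delta^{-\epsilon}$ loss; interpolation with the trivial $L^2$-estimate (Plancherel, giving $\delta^{1/2}$) finishes the proof.

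The hardest step by far is the polynomial-partitioning induction for the $p_0(d)$ range. Running Guth's machinery while preserving explicit $\delta$-dependence requires tracking a wave-packet decomposition at the Knapp scale $\delta^{1/2}$ through many inductive levels, carefully controlling how the constants from the $k$-linear broad estimate, from induction on scale on the narrow term, and from $\varepsilon$-removal compound to produce only a sub-polynomial loss in $\delta^{-1}$. Verifying that the final exponent agrees sharply with $p_0(d)$, rather than with a slightly larger value, forces the choice of $k$ modulo $3$ and is the reason for the piecewise nature of the formula.
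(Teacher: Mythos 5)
The paper does not prove Theorem \ref{LRS_L} at all: it is quoted verbatim from Lee--Rogers--Seeger \cite{LRS} and Lee \cite{lee2}, and only the statement (with the typo $\frac dp$ for $\frac ds$, which you correctly flag) is used later. So your sketch has to stand on its own as a proof of a deep external result, and as written it does not. The central gap is the transversality step. After polarization the bilinear form is $\int_{1/2}^2 S^\varphi_{t,\delta}f\,\overline{S^\varphi_{t,\delta}g}\,dt$ with a \emph{single} $t$, so a Whitney decomposition of $[1/2,2]^2$ into strips $|t-s|\sim 2^{-j}$ does not arise from this expression; and even for two multipliers at radii $\sqrt t$ and $\sqrt s$ with $|t-s|\sim 2^{-j}$, the supports are concentric annuli, which have parallel normals at equal angular positions. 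Radial separation gives no transversality, so Tao's bilinear extension theorem cannot be invoked ``after rescaling'' as you claim. In the actual arguments (in \cite{LRS}, and in the earlier work \cite{lee1}) the Whitney decomposition is performed in the \emph{angular} variable within a single $\delta$-annulus, transversality comes from angular separation of caps combined with parabolic rescaling, and the $t$-integration is exploited through orthogonality; this is the ingredient your reduction is missing.

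For the range $s>p_0(d)$ there is a second gap: the mod $3$ formula $p_0(d)=2+\frac{12}{4d-6-k}$, $d\equiv k\ (\mathrm{mod}\ 3)$, is the signature of the Bourgain--Guth broad--narrow iteration built on the Bennett--Carbery--Tao multilinear estimates, which is what \cite{lee2} actually runs for this square function; Guth's polynomial partitioning produces a different family of exponents, and you give no computation showing that a partitioning-plus-$k$-linear scheme reproduces exactly $p_0(d)$ --- the sentence ``the arithmetic of choosing $k$ optimally modulo $3$ produces exactly $p_0(d)$'' asserts the conclusion rather than proving it, and the parenthetical attribution to ``parity issues'' is not the source of the mod $3$ structure. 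Finally, the $\epsilon$-removal step is invoked in the wrong direction: the target \eqref{del-square} already concedes a $\delta^{-\epsilon}$ loss, so no $\epsilon$-removal is needed, and upgrading restricted weak-type bounds to strong type is a matter of interpolation, not of Tao's $\epsilon$-removal lemma. In short, the high-level strategy (bilinear restriction for the $\frac{2(d+2)}d$ threshold, multilinear methods for $p_0(d)$) points at the right literature, but the transversality mechanism, the source of the exponent formula, and the closing interpolation step all need to be repaired before this is a proof.
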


Since the square function presented in the estimate \eqref{tem:sq} is bounded by   $\mathfrak S^\varphi_\delta f(x)$, by Theorem \ref{LRS_L}  and Proposition \ref{prop:square} we can obtaine $L^p (l^\infty)$-estimate for the mixed square function $\mathfrak D^\varphi_{\delta,k}$  for $p\ge 2.$ 

We now prove Proposition \ref{prop:square}.

\begin{proof}[Proof of Proposition \ref{prop:square}]
To estimate $\FD^\varphi_{\delta,k}f,$ we
 first decompose the interval $[0,2]$ into dyadic subintervals as follows:
\[  [0,2]=[0,4\delta]\cup[4\delta,2], \  \  \ [4\delta,2]=\bigcup_{j=-1}^{j_\circ} \BI_j:= \bigcup_{j=-1}^{j_\circ} [4\delta, 2]\cap [2^{-j-1},2^{-j}],\]
where $j_\circ$ is the smallest integer satisfying $4\delta\ge 2^{-j_\circ-1}.$
Then by the triangle inequality, the left-hand side of \eqref{eq:square} is bounded by $\sum_{j=-1}^{j_\circ}\CI_j +\CI\CI$, where
\begin{align*}
(\CI_j )^p &=\int_{\R^d} \Big(\sup_{k\in \Z}\int_1^2 \sum_{\rho\in\delta\Z\cap \BI_j} |S^\varphi_{\rho,\delta, 2^k\lambda}f(x)|^2 d\lambda \Big)^{p/2} dx,\quad -1\le j\le j_\circ,\\
(\CI\CI)^p&= \int_{\R^d} \Big(\sup_{k\in \Z}\int_1^2 \sum_{\rho\in\delta\Z\cap [0,4\delta]} |S^\varphi_{\rho,\delta, 2^k\lambda}f(x)|^2 d\lambda \Big)^{p/2} dx.
\end{align*}
Note that each $S^\varphi_{\rho,\delta,\lambda}$ satisfies, for $t>0$,
\begin{equation}\label{scaling}
S^\varphi_{\rho,\delta}f(x) = S^\varphi_{t\rho, t\delta} \,f_{t^{1/2}}(t^{-1/2}x), \ \mbox{  and  } \ S^\varphi_{\rho,\delta,t}f(x) = S^\varphi_{\rho, \delta} \,f_{t}(t^{-1}x),
\end{equation}
where $f_t =f(t\,\cdot)$.
By these relations and scaling, in order to prove Proposition \ref{prop:square}, it suffices to deal with  $\CI_0$ and $\CI\CI$. 
 More precisely, we will show that  for $0<\delta\le1/4$ 
\begin{align}
\max\left\{\,\CI_0, \,\CI\CI\,\right\}&\lesssim \delta^{-\beta-\frac12}\|f\|_{L^p(\R^d)}\label{I0}
\end{align}
with   uniform implicit constant  as long as  $\varphi \in \CC^{N+1}(I)$. 
Indeed, by  the first relation in \eqref{scaling}, for $-1\le j\le j_\circ$ we see that
\[
\CI_j \le  2^{jd/2p} \Big\| \Big(\sup_{k\in \Z}\int_1^2 \sum_{\rho\in 2^{j}\delta\Z\cap [1/2,1]} |S^\varphi_{\rho,2^j\delta, 2^k\lambda}f_{2^{j/2}}|^2 \Big)^{1/2}\Big\|_{L^p(\R^d)}.\]
Since $2^{-j}\ge 2^{-j_\circ}>4\delta$, $2^j\delta <1/4$ for any $0<\delta\le 1/4$. Thus we can apply \eqref{I0} to obtain
$\CI_j\lesssim (2^j \delta)^{-\beta-\frac12} \|f\|_{L^p(\R^d)}.$
Here the implicit constant is independent of the choice of $\varphi\in \CC^{N+1}(I)$, $\delta$, and $j$. Note that $-\beta-\frac12\le 0$.
Taking the summation over $j$, we have, for any $\epsilon>0$,
\[\| \sup_{k\in \Z} | \FD^\varphi_{\delta,k}f|\|_{L^p(\R^d)}
\lesssim j_\circ\delta^{-\beta-\frac12} \|f\|_{L^p(\R^d)}\le  C_\epsilon \delta^{-\beta-\frac12-\epsilon} \|f\|_{L^p(\R^d)}
\]
with $C_\epsilon$ uniform for $0<\delta<\frac14$ and $\varphi\in \CC^{N+1}(I)$, since $j_\circ =O(\log (1/\delta))$.

We now tern to the proof of \eqref{I0}. We first estimate $\CI_0$. To do this,
let us define the Littlewood-Paley projection operator $P_m$, $m\in \Z$, by
\begin{equation*}\label{LW}
\widehat{P_m f}(\xi)=\beta(2^{-m}|\xi|)\widehat{f} (\xi),
\end{equation*}
where $\beta$ is a smooth cutoff function supported in the interval $[\frac12,2]$ and satisfying $0\le \beta\le 1$ and $ \sum_{m\in \Z}\beta(2^{-m}t)=1$ for $t>0$. Since $\supp\, \varphi\subset [-1,1]$, $\rho \in [1/2,1]$, $\lambda\in [1,2]$, and $0<\delta\le 1/4,$
we see that
\[ \varphi\Big(\frac{\rho-|2^k\lambda\xi|^2}{\delta}\Big)\beta(2^{-m}\xi)\equiv 0\quad \mbox{ except } -3\le m+k\le 2. \] 
Using this we see that for any $k\in \Z$
\begin{equation}\label{eq} 
S^\varphi_{\rho,\delta, 2^k\lambda} f\,\,= \sum_{\substack{m\in\Z\,:\\ -3\le m+k\le 2}} S^\varphi_{\rho,\delta, 2^k\lambda} (P_mf).
\end{equation}
Note that  for each $k$ the number of non-vanishing $m$ is at most $6$. Thus, inserting \eqref{eq} into $\CI_0$ and applying the second relation in \eqref{scaling}, H\"older's inequality, and $l^p\subset l^\infty$, $(\CI_0)^p$ is bounded by a constant multiple of 
\begin{equation}\label{tem:prop}
 \sum_{k\in \Z}\sum_{\substack{m\in\Z\,:\\-3\le m+k\le 2}}\Big(\int_1^2  (2^k\lambda)^{2d/p}\Big\|\Big(\sum_{\rho\in \delta\Z\cap [1/2,1]} |S^\varphi_{\rho,\delta,1}(P_mf)_{2^k\lambda}|^2\Big)^{1/2}\Big\|_{L^p(\R^d)}^2 d\lambda \Big)^{p/2}.
\end{equation}
It was shown in \cite[Lemma 2.3]{JLV} that $L^p$-boundedness properties of the square function in \eqref{tem:sq} and the discretize square function in the above are essentially equivalent when $p\ge 2$. Hence by the assumption \eqref{tem:sq} we see that 
\begin{align*}
 \Big\|\Big(\sum_{\rho\in \delta\Z\cap [1/2,1]} |S^\varphi_{\rho,\delta,1}(P_mf)_{2^k\lambda}|^2\Big)^{1/2}\Big\|_{L^p(\R^d)}&\lesssim \delta^{-\beta-\frac12} \|(P_mf)_{2^k\lambda}\|_{L^p(\R^d)}\\
 &\lesssim \delta^{-\beta-\frac12} (2^k\lambda)^{ -d/p}\|P_mf\|_{L^p(\R^d)}
\end{align*}
holds with the implicit constant independent of  $\varphi\in \CC^{N+1}(I)$ and $0<\delta\le 1/4.$ Putting this back into \eqref{tem:prop}, we have
\begin{align*}
\CI_0 &\lesssim  \delta^{-\beta-\frac12} \Big(\sum_{k\in \Z}\sum_{\substack{m\in\Z\,:\\-3\le m+k\le 2}}\|P_mf \|_{L^p(\R^d)}^p \Big)^{1/p}\\
&\lesssim  \delta^{-\beta-\frac12} \|(\sum_{m\in\Z} |P_m f|^2)^{1/2}\|_{L^p(\R^d)} \lesssim    \delta^{-\beta-\frac12} \|f\|_{L^p(\R^d)},
\end{align*}
since $p\ge 2$. For the last inequality we use the Littlewood-Paley inequality.

It remains to estimate the term $\CI\CI$, which is much simpler. Since $S^\varphi_{\rho,\delta,\lambda}$ is a multiplier operator, this can be written  as $S^\varphi_{\rho,\delta,\lambda}f(x)=\lambda^{-d} K_{\rho,\delta}(\lambda^{-1} \cdot )\ast f(x)$, where
\[ K_{\rho,\delta}(x) =\int_{\R^d} e^{2\pi i x\cdot \xi} \varphi\big(\frac{\rho-|\xi|^2}\delta \big)d\xi.\]
From integration by parts, we see that $|K_{\rho,\delta}(x)|\le C_{\rho,\delta,\varphi}\, \delta^{d/2}(1+\delta^{1/2}|x|)^{-d-1}$. Especially, when $\rho\le 4\delta$, the constant $C_{\rho,\delta,\varphi}$ depends only on the $C^{d+1}$-norm of $\varphi$. Hence the constant is independent of  the choice of $\varphi\in \CC^{N+1}(I)$, $\rho\le 4\delta$, and $0<\delta\le 1/4$ whenever $N\ge d.$ Applying the kernel estimate in the above,
\begin{equation}\label{small}
\begin{aligned}
 |S^\varphi_{\rho,\delta,2^k\lambda}f(x)|
&\lesssim (2^k\lambda \delta^{-1/2})^{-d} \int_{\R^d}\big(1+2^{-k}\lambda^{-1} \delta^{1/2}|x-y|\big)^{-d-1}|f(y)|\,dy\\
&\lesssim \sum_{j=0}^{\infty} 2^{-j} \big(2^k\lambda \delta^{-1/2}2^{j}\big)^{-d} \int_{B^d(x, 2^{k}\lambda \delta^{-1/2}2^j)} |f(y)| \,dy\\
&\lesssim \sum_{j=0}^{\infty} 2^{-j} Mf(x) \lesssim Mf(x)
\end{aligned}
\end{equation}
holds uniformly for $\varphi \in\CC^{N+1}(I) $, $0\le\rho\le 4\delta$, and $k\in \Z$. 
Here $M$ is the Hardy-Littlewood maximal function.
Inserting this into $\CI\CI$, we get
\begin{align*} 
\CI\CI\lesssim \| Mf \|_{L^p(\R^d)}\lesssim \|f\|_{L^p(\R^d)},
\end{align*}
which completes the proof. 
\end{proof}

%%%%%%%%%%%%%%%%%%%%%%%%%%%%%%%%%%%%%%%%%%%%%%%%%%%%%%%%%%%%%%%%%%%%%%%%%%%%%%%%%%%%%%%%%
\section{Proof of Proposition \ref{prop:Lp}}
To verify Proposition  \ref{prop:Lp}, we adopt the  idea 
in \cite{JLV} which decomposes the bilinear operator  into a sum of 
products of linear operators. The decomposition lemma (Lemma 3.1 in \cite{JLV}) reduces  the problem of obtaining  $L^p\times L^q\to L^r$ estimates  for the auxiliary bilinear operator $\FB^\delta_1$ to that for a sum of products of two linear multiplier operators. In what follows we show that the argument also works for the maximal estimate.
We reformulate the decomposition argument as a single lemma (Lemma \ref{lem:decom} below)  which was implicit in \cite[Section 3]{JLV}.  This  provides  a pointwise bound on   the auxiliary operator $\FB^\delta_\lambda$  by  a sum of product of two linear operators $S^\phi_{\rho,\delta,\lambda}$ and $S^\phi_{\varrho-\rho,\delta,\lambda}$.  Since the proof of Lemma  \ref{lem:decom} is already contained in \cite{JLV} we only provide a brief of  sketch.  

\begin{lemma}\label{lem:decom} \cite{JLV}
Let $d\ge 2$, $N\in \mathbb N,$ $0<\delta\le 1/4,$ and  $\epsilon>0$. Set $\tilde{\delta}=\delta^{1+\epsilon}$. Then there exists $C=C(\psi, N)$  such that for any $\lambda>0$
\begin{align*}
&\left|\FB^\delta_\lambda(f_1,f_2)(x)\right| \le  C\times \\
&\sum_{\varrho\in \tilde{\delta}\Z\cap [1-4\delta,1+2\delta]} \sum_{\rho\in \tilde{\delta}\Z\cap [0,2]} 
\Big( \sum_{\substack{a,b\in \mathbb N\cup\{0\}\,:\\0\le a+b\le N}} C_{a,b}\delta^{\epsilon(a+b)} \left|S^{\varphi_a}_{\rho,\tilde{\delta},\lambda}f_1(x)S^{\varphi_b}_{\varrho-\rho,\tilde{\delta},\lambda}f_2(x)\right|\\
&\hspace{3cm}+ \delta^{\epsilon N} \int_\R \left|\wh\psi(\tau)\,T_{m_{\varrho,\rho}^{\tilde{\delta}}(\lambda\cdot,\lambda\cdot,\tau)}(f_1,f_2)(x) \right|d\tau\Big),
\end{align*}
where, $\varphi_a$ and $\varphi_b$ are in $\CC^N(I)$ for $0\le a,b\le N$, and
\[ T_{m_{\varrho,\rho}^{\tilde{\delta}}(\lambda\cdot,\lambda\cdot,\tau)}(f_1,f_2)(x) =\int_{\R^{2d}} e^{2\pi i x\cdot(\xi+\eta)} m_{\varrho,\rho}^{\tilde{\delta}}(\lambda \xi,\lambda\eta,\tau) \wh f_1(\xi) \wh f_2(\eta) d\xi g\eta.\]
Here $m_{\varrho,\rho}^{\tilde{\delta}}$ satisfies, for all multi-indices $\beta$ and $\gamma$, $|\beta|+|\gamma|\le N,$
\begin{equation}\label{re}
|\partial^\beta_\xi \partial^\gamma_\eta m_{\varrho,\rho}^{\tilde{\delta}}(\xi,\eta,\tau)|\le C (1+|\tau|)^N\tilde{\delta}^{-|\beta|-|\gamma|}
\end{equation}
with $C$ independent of $\rho$ and $\varrho$.
 \end{lemma}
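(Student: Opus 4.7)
The plan is to decompose the multiplier of $\FB^\delta_\lambda$ by first localizing $|\lambda\xi|^2$ and $|\lambda\eta|^2$ at the fine scale $\tilde{\delta}=\delta^{1+\ep}$, then Taylor expanding in order to decouple the $\xi$ and $\eta$ dependencies into tensor products, with a controlled remainder handled via Fourier inversion. Fix a smooth partition of unity $\tilde\chi\in C_c^\infty([-1,1])$ satisfying $\sum_{n\in\Z}\tilde\chi(\cdot-n)\equiv 1$, and insert
\[
1=\sum_{\rho\in\tilde{\delta}\Z}\tilde\chi\Big(\frac{|\lambda\xi|^2-\rho}{\tilde{\delta}}\Big)\sum_{\sigma\in\tilde{\delta}\Z}\tilde\chi\Big(\frac{|\lambda\eta|^2-\sigma}{\tilde{\delta}}\Big)
\]
into the symbol of $\FB^\delta_\lambda$. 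Reindexing by $\varrho=\rho+\sigma$ and using $\supp\psi\subset[1/2,2]$, which forces $|\lambda\xi|^2+|\lambda\eta|^2\in[1-2\delta,1-\delta/2]$, only $\varrho\in\tilde{\delta}\Z\cap[1-4\delta,1+2\delta]$ and $\rho\in\tilde{\delta}\Z\cap[0,2]$ contribute.

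On each piece set $u=(\rho-|\lambda\xi|^2)/\tilde{\delta}$ and $v=((\varrho-\rho)-|\lambda\eta|^2)/\tilde{\delta}$, so that $|u|,|v|\lesssim 1$ on the support of the cutoffs and the argument of $\psi$ equals $s_\varrho+\delta^\ep(u+v)$ with $s_\varrho:=(1-\varrho)/\delta$. Taylor expansion of $\psi$ to order $N$ gives
\[
\psi\bigl(s_\varrho+\delta^\ep(u+v)\bigr)=\sum_{n=0}^{N-1}\frac{\psi^{(n)}(s_\varrho)}{n!}\delta^{\ep n}(u+v)^n+R_N,
\]
and the binomial identity $(u+v)^n=\sum_{a+b=n}\binom{n}{a}u^a v^b$ already separates $\xi$- and $\eta$-dependencies. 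Setting $\varphi_a(u):=u^a\tilde\chi(u)$, which lies in $\CC^N(I)$ after normalization, each polynomial summand is precisely the symbol of $S^{\varphi_a}_{\rho,\tilde{\delta},\lambda}f_1\cdot S^{\varphi_b}_{\varrho-\rho,\tilde{\delta},\lambda}f_2$ times the bounded constant $\psi^{(a+b)}(s_\varrho)\binom{a+b}{a}/(a+b)!$ and the weight $\delta^{\ep(a+b)}$, producing the main term with the stated gain.

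For the remainder I would combine Fourier inversion $\psi(s)=\int_\R\wh\psi(\tau)e^{2\pi i\tau s}d\tau$ with the order-$N$ Taylor remainder for the exponential,
\[
e^{2\pi i\tau\delta^\ep(u+v)}=\sum_{n=0}^{N-1}\frac{\bigl(2\pi i\tau\delta^\ep(u+v)\bigr)^n}{n!}+\frac{\bigl(2\pi i\tau\delta^\ep\bigr)^N(u+v)^N}{(N-1)!}\int_0^1(1-t)^{N-1}e^{2\pi it\tau\delta^\ep(u+v)}dt,
\]
and the identity $\int\wh\psi(\tau)(2\pi i\tau)^ne^{2\pi i\tau s_\varrho}d\tau=\psi^{(n)}(s_\varrho)$ to recover the Taylor sum. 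The remaining tail is a bilinear multiplier whose symbol defines $m_{\varrho,\rho}^{\tilde{\delta}}(\lambda\xi,\lambda\eta,\tau)$, inheriting the factor $\delta^{\ep N}$ from $(\delta^\ep)^N$.

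The main obstacle is verifying the symbol estimate \eqref{re}. On the support one has $|\lambda\xi|^2\sim\rho\lesssim 1$ and $|\lambda\eta|^2\sim\varrho-\rho\lesssim 1$, so each $\xi$- or $\eta$-derivative of the cutoffs, of the polynomial factor $(u+v)^N$, or (after accounting for $\delta^\ep$) of the oscillating factor $e^{2\pi it\tau\delta^\ep(u+v)}$ contributes at most $\tilde{\delta}^{-1}$, together with a power of $|\tau|$ that combines with the $(2\pi i\tau)^N$ already present into an overall factor $(1+|\tau|)^N$ (possibly after enlarging $N$, which is harmless thanks to the Schwartz decay of $\wh\psi$ used when integrating against it). Applying Leibniz's rule and bounding the remaining cutoffs and polynomial factors pointwise then yields \eqref{re} uniformly in $\varrho,\rho,\tau$ and $t\in[0,1]$.
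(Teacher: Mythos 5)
Your proposal is correct and follows essentially the same route as the paper's (sketched) proof: a partition of unity at scale $\tilde{\delta}$ in $|\lambda\xi|^2$ and $|\lambda\eta|^2$, Fourier inversion of $\psi$ combined with an order-$N$ Taylor expansion whose binomial terms decouple into the products $S^{\varphi_a}_{\rho,\tilde{\delta},\lambda}f_1\,S^{\varphi_b}_{\varrho-\rho,\tilde{\delta},\lambda}f_2$ with the gain $\delta^{\epsilon(a+b)}$, and a remainder treated as a bilinear multiplier satisfying \eqref{re}. Your observation that the derivatives of the remainder symbol produce extra powers of $|\tau|$ (harmless by the Schwartz decay of $\wh\psi$) is an accurate accounting of the only delicate point.
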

 
\begin{proof}[Sketch of the proof of Lemma \ref{lem:decom}] 
Let $\varphi$ be a smooth function supported in $[-1,1]$ and satisfying $\sum_{k\in \Z}\varphi(t+k)=1$ for all $t\in \R.$ Then we can write
\begin{equation}\label{sum}
\begin{aligned}
 \FB^\delta_\lambda&(f_1,f_2)(x)  =
\\  & \sum_{\varrho\in \tilde{\delta}\Z\cap [1-4\delta,1+2\delta]} \sum_{\rho\in \tilde{\delta}\Z\cap [0,2]} 
\int_{\R^d\times\R^d} e^{2\pi i x\cdot(\xi+\eta)} \psi\Big(\frac{1-|\lambda\xi|^2-|\lambda\eta|^2}\delta\Big)
\\&\hspace{1cm}\times\varphi\Big(\frac{\rho-|\lambda\xi|^2}{\tilde{\delta}}\Big)\varphi\Big(\frac{\varrho-\rho-|\lambda\eta|^2}{\tilde{\delta}}\Big)\wh{f_1}(\xi)\wh f_2(\eta)d\xi d\eta,
\end{aligned}
\end{equation}
since $ \psi$ is supported in $[1/2,2].$ By the inversion formula, the multiplier of $\FB^\delta_\lambda$ is expressed by
\[ \psi\Big(\frac{1-|\lambda\xi|^2-|\lambda\eta|^2}\delta\Big) =\int_\R \wh \psi(\tau) e^{2\pi i\tau(\frac{\varrho-|\lambda\xi|^2-|\lambda\eta|^2}{\delta})}e^{2\pi i \tau(\frac{1-\varrho}\delta)}d\tau.\]
Applying Taylor's theorem for $e^{2\pi i\tau(\frac{\varrho-|\lambda\xi|^2-|\lambda\eta|^2}{\delta})}$, we have
\begin{align*}
e^{2\pi i\tau(\frac{\varrho-|\lambda\xi|^2-|\lambda\eta|^2}{\delta})} =\sum_{0\le a+b\le N} &c_{a,b} \left(\frac{\tau(\rho-|\lambda\xi|^2)}\delta\right)^a \left(\frac{\tau(\varrho-\rho-|\lambda\eta|^2)}\delta\right)^b \\
&+\Big(\begin{array}{c} {\text{remainder}}\\{\text{term}}\end{array}\Big).
\end{align*}
Inserting this into the first expression \eqref{sum} and properly arranging the involved terms, we get the desired decomposition.  In fact, 
the terms in the sum $\sum_{0\le a+b\le N} $  give rise to $S^{\varphi_a}_{\rho,\tilde{\delta},\lambda}f_1 S^{\varphi_b}_{\varrho-\rho,\tilde{\delta},\lambda}f_2 $ and  the remainder term   to $T_{m_{\varrho,\rho}^{\tilde{\delta}}(\lambda\cdot,\lambda\cdot,\tau)}(f_1,f_2)$. 
We refer to \cite[Proof of Lemma 3.1]{JLV} for the details.
\end{proof}

Once we have Lemma \ref{lem:decom} and the square function estimates in Section \ref{sec:w}, proof of Proposition  \ref{prop:Lp} is rather routine. 

\subsection*{Proof of Proposition \ref{prop:Lp}} Fix $\epsilon>0$ and set $\widetilde{\delta}=\delta^{1+\epsilon}.$  Thanks to Lemma \ref{lem:decom}, it suffices to obtain,  for any $\varrho\in \widetilde\delta\Z\cap[1/2,2]$ and  $0\le a,b\le \CN,$
the desired bound on
\begin{equation}\label{main}
\Big\| \sup_{k\in\Z}\int_1^2\sum_{\rho\in \widetilde{\delta}\Z\cap[0,2]}\big| S^{\varphi_a}_{\rho,\widetilde{\delta},2^k\lambda}f_1(x)S^{\varphi_b}_{\varrho-\rho,\widetilde{\delta},2^k\lambda}f_2(x)\big| d\lambda \Big\|_{L^r(\R^d)}
\end{equation}
and
\begin{equation}\label{error}
\Big\| \sup_{k\in\Z}\int_1^2\int_\R \big|\wh\psi(\tau) \,T_{m_{\varrho,\rho}^{\widetilde{\delta}}(2^k \lambda\cdot,2^k \lambda\cdot,\tau)} (f_1,f_2)(x) \big|d\tau d\lambda \Big\|_{L^r(\R^d)},
\end{equation}
where $\CN$ is a large integer which will be chosen later, $\varphi_a$ and $\varphi_b$ are in $ \CC^\CN(I)$, and $m_{\varrho,\rho}^{\widetilde{\delta}}$ satisfies the estimate \eqref{re}.

Since  $\varphi_a,\varphi_b\in \CC^\CN(I)$ for all $0\le a,b\le \CN$, Proposition \ref{prop:square} and Corollary \ref{lem:L2} give the estimate
\begin{equation}\label{e_main} 
\eqref{main}\,\lesssim_\epsilon\, \widetilde{\delta}^{-\alpha_{\fp}(p,q)+1-\epsilon}  \|f_1\|_{L^p(\R^d)}  \|f_2 \|_{L^q(\R^d)}
\end{equation}
whenever $\CN$ is large enough. Here the implicit constant is independent of  $\varrho$ and $0\le a,b\le \CN$. More precisely, by H\"older's inequality, we have 
\[ \eqref{main} \le \prod_{j=1,2}\Big\| \Big(\sup_{k\in\Z}\int_1^2 \left|\FD_j(f_j)\right|^2 d\lambda \Big)^{1/2}\Big\|_{L^{p_j}(\R^d)},\]
where $p_1=p$, $p_2=q$, $\FD_1(f_1)=\big(\sum_{\rho\in \widetilde{\delta}\Z\cap[0,2]}\big| S^{\varphi_a}_{\rho,\widetilde{\delta},2^k\lambda}f_1\big|^2\big)^{1/2}$, and $\FD_2(f_2)=\big(\sum_{\rho\in \widetilde{\delta}\Z\cap[0,2]}\big| S^{\varphi_b}_{\varrho-\rho,\widetilde{\delta},2^k\lambda}f_2\big|^2\big)^{1/2}$. Note that $\FD_1=\FD^{\varphi_a}_{\widetilde\delta,k}$ and $\FD_2 \le \FD^{\varphi_b}_{\widetilde\delta,k}$, since $\varrho\in \widetilde{\delta}\Z$ and $S^{\varphi_a}_{\rho,\widetilde\delta}f\equiv 0$ whenever $\rho\in \widetilde\delta\Z$ is negative. Moreover, by interpolation between the estimates in  Proposition \ref{prop:square} and Corollary \ref{lem:L2}, we see that  $\FD^{\varphi_a}_{\widetilde{\delta},k}$ satisfies
\[ \big\|\sup_{k\in\Z}|\FD^{\varphi_a}_{\widetilde{\delta},k}|\big\|_{L^p(\R^d)}\lesssim 
\begin{cases}
\widetilde\delta^{-\alpha(p)-\epsilon}&\text{if }\fp<p\le \infty,\\
\widetilde\delta^{-\alpha(\fp)(1-\frac2p)/(1-\frac2\fp)-\epsilon}&\text{if }2\le p\le \fp,
\end{cases}
 \]
if $\CN > \max \{N_\circ, d+1\}$. This yields the estimate \eqref{e_main}.

 We now consider the term \eqref{error}. Since $T_{m_{\varrho,\rho}^{\widetilde{\delta}}(2^k \lambda \cdot,\tau)} $ is a bilinear multiplier operator, 
we may write \eqref{error} as 
\[ \Big\| \sup_{k\in\Z}\int_1^2\int_\R  \Big|\wh\psi(\tau)\int_{\R^{2d}} K_{\varrho,\rho}^{\widetilde{\delta}}(x-y,x-z) f_1(y) f_2(z) dy dz \Big|d\tau d\lambda \,\Big\|_{L^r(\R^d)},\]
where $K^{\widetilde{\delta}}_{\varrho,\rho}(y,z)=\CF^{-1}(m_{\varrho,\rho}^{\widetilde{\delta}} (2^k\lambda\cdot,2^k\lambda\cdot,\tau))(y,z) $. Notice that  $m^{\widetilde{\delta}}_{\varrho,\rho}$ satisfies \eqref{re}. Thus, by integration by parts, it is easy to see that  the absolute value of the kernel $K^{\widetilde\delta}_{\varrho,\rho}$  is bounded  by
\[C(1+|\tau|)^N(2^{k}\lambda)^{-2d}(1+2^{-k}\lambda^{-1}\widetilde{\delta}|y|)^{-d-\frac12}(1+2^{-k}\lambda^{-1}\widetilde{\delta}|z|)^{-d-\frac12}\]
with $C>0$ independent of $\rho,\varrho,$ and $\widetilde\delta$,  if $\CN>2d.$ Similarly as in  \eqref{small}, by  H\"older's inequality  we see
\begin{align*}
\eqref{error}&\lesssim \widetilde{\delta}^{-2d} \Big\| \int_1^2\int_\R  |\wh\psi(\tau)|(1+|\tau|)^N Mf_1(x) Mf_2(x) d\tau d\lambda \,\Big\|_{L^r(\R^d)}\\
&\lesssim  \widetilde{\delta}^{-2d}\|Mf_1 \|_{L^p(\R^d)}\|Mf_2 \|_{L^q(\R^d)}\lesssim \tilde{\delta}^{-2d} \|f_1\|_{L^p(\R^d)} \|f_2\|_{L^q(\R^d)}.
\end{align*}
Here the second inequality holds because of $\psi \in \CS(\R).$

Combining all of the above estimates, we have for any  $\epsilon>0$
\begin{align*}
&\Big\| \sup_{k\in\Z} \int_1^2|\FB^\delta_{2^k\lambda}(f_1,f_2)(x)|d\lambda \Big\|_{L^r(\R^d)}\\
\lesssim & \, \delta^{-\alpha_{\fp}(p,q)+1 }\big(\delta^{-\epsilon\alpha_\fp(p,q)} +\delta^{\alpha_{\fp}(p,q)-2d +\epsilon(\CN-2d-2)}\big) \|f_1\|_{L^p(\R^d)}\|f_2\|_{L^q(\R^d)}.
\end{align*}
 By choosing a sufficiently large $\CN$ to be $ \alpha_{\fp}(p,q)-2d +\epsilon(\CN-2d-2)>0$, we obtain the desired estimate \eqref{eq:Lp}.
This completes the proof. 

{\bf Acknowledgement.} This work was  supported by the POSCO Science Fellowship of POSCO TJ Park Foundation (E. Jeong) and  by NRF-2018R1A2B2006298 (S. Lee).  
The authors would like thank D. He and L. Yan for discussion at early stage of this project and Y. Heo for making  the preprint \cite{HHY} available. 
%%%%%%%%%%%%%%%%%%%%%%%%%%%%%%%%%%%%%%%%
%%%%%%%%%%%%%%%%%%%%%

\bibliographystyle{amsalpha}

\end{document}